\newtheorem{corollary}{Corollary}
\newtheorem{openproblem}{Open Problem}
\newcommand{\OFG}{{\rm{OFG}}}
\newtheorem{theorem}{\bf Theorem}[section]
\newtheorem{lemma}{\bf Lemma}[section]
\theoremstyle{definition}
\begin{document}
\title{Maximal origami flip graphs of flat-foldable vertices: properties and algorithms} 

\author{Thomas C. Hull\thanks{Western New England University, Springfield, MA, {\tt thull@wne.edu}}, Manuel Morales\thanks{Arizona State University, Tempe, AZ, {\tt mamora22@asu.edu}}, Sarah Nash\thanks{New College of Florida, Sarasota, FL, {\tt sarah.nash18@ncf.edu}}, Natalya Ter-Saakov\thanks{Rutgers University, New Brunswick, NJ, \tt{nt399@rutgers.edu}} }


\maketitle


\begin{abstract}
\textit{Flat origami} studies straight line, planar graphs $C=(V,E)$ drawn on a region $R\subset\mathbb{R}^2$ that can act as crease patterns to map, or fold, $R$ into $\mathbb{R}^2$ in a way that is continuous and a piecewise isometry exactly on the faces of $C$.  Associated with such crease pattern graphs are  \textit{valid mountain-valley (MV) assignments} $\mu:E\to\{-1,1\}$, indicating which creases can be mountains (convex) or valleys (concave) to allow $R$ to physically fold flat without self-intersecting.  In this paper, we initiate the first study of how valid MV assignments of single-vertex crease patterns are related to one another via \textit{face-flips}, a concept that emerged from applications of origami in engineering and physics, where flipping a face $F$ means switching the MV parity of all creases of $C$ that border $F$.  Specifically, we study the origami flip graph $\OFG(C)$, whose vertices are all valid MV assignments of $C$ and edges connect assignments that differ by only one face flip.  We prove that, for the single-vertex crease pattern $A_{2n}$ whose $2n$ sector angles around the vertex are all equal, $\OFG(A_{2n})$ contains as subgraphs all other origami flip graphs of degree-$2n$ flat origami vertex crease patterns.  We also prove that $\OFG(A_{2n})$ is connected and has diameter $n$ by providing two $O(n^2)$ algorithms to traverse between vertices in the graph, and we enumerate the vertices, edges, and degree sequence of $\OFG(A_{2n})$.  We conclude with open questions on the surprising complexity found in origami flip graphs of this type.
\end{abstract}


\section{Introduction}

When folding a piece of paper into a flat object, the creases that are made will be straight lines.  This describes \emph{flat origami} \cite{basics}, which we formally model with a pair $(C,P)$, called the \emph{crease pattern}, where $P$ is a closed region of the plane (our paper), and the set of creases $C=(V(C), E(C))$ is a planar graph drawn on $P$ with straight line segments for the edges.  (When the exact shape of the paper $P$ is not important, we will refer to the crease pattern merely as $C$.) If there exists a mapping $f:P\to\mathbb{R}^2$ that is continuous, non-differentiable along the edges of $C$, and an isometry on each face of $C$, then we say that $(C,P)$ is \emph{(locally) flat-foldable}. Also, folded creases come in two types when viewing a fixed side of the paper:  \emph{mountain} creases, which fold away in a convex manner, and \emph{valley} creases, which are concave.  We model this with a function $\mu:E(C)\to\{-1,1\}$, called a \emph{mountain-valley (MV) assignment} for the crease pattern $C$, where 1 (respectively $-1$) represents a mountain (respectively valley) crease.  A MV assignment $\mu$ is called \emph{valid} if $\mu$ can be used to fold $C$ into a flat object without the paper intersecting itself.  

Capturing mathematically how paper self-intersection works, and how it can be avoided, to achieve \emph{global} flat-foldability is difficult.  In fact, determining if a crease pattern $(C,P)$  is globally flat-foldable is NP-hard \cite{boxpleat,Bern}, even if a specific MV assignment is already given.  In the special case where  the crease  pattern has only one vertex in the interior of $P$, called a \emph{single-vertex crease pattern} or a \emph{flat vertex fold}, determining if a MV assignment is valid is not straight-forward \cite{basics} but can be determined in linear time \cite{GFALOP}.  Indeed, there are many open questions that remain about enumerating valid MV assignments \cite{Hull2} and understanding their structure \cite{TTT}, even for very simple crease patterns.

Flat-foldability and valid MV assignments have been of interest to scientists in the study of origami mechanics and their application in constructing metamaterials \cite{Silverberg}, even in the case of single-vertex crease patterns \cite{Kang}.  A concept that has emerged from such applications is that of a \emph{face flip}, where a valid MV assignment $\mu$ is altered by switching only the mountains and valleys that surround a chosen face $F$, denoting the new MV assignment (which may or may not be valid) by $\mu_F$.  Face flips were first introduced in the literature by VanderWerf \cite{Vander} and have been utilized in applications ranging from tuning metamaterials \cite{Silverberg} to analyzing the statistical mechanics of origami tilings \cite{Assis}.

In this paper, we explore the relationships between valid MV assignments of a given crease pattern $C$ using a tool called the \emph{origami flip graph}, denoted $\OFG(C)$, which is a graph whose vertices are all valid MV assignments of $C$ and where two vertices $\mu$ and $\nu$ are connected by an edge if and only if  there exists a face $F$ of $C$ such that flipping $F$ changes $\mu$ to $\nu$ (and vice-versa, i.e., $\nu=\mu_F$).   

Origami flip graphs were introduced in \cite{faceflips}, but only in the context of origami tessellations (crease patterns that form a tiling of the plane). In the present work, we focus on flat-foldable crease patterns that have a single vertex in the paper's interior, called \textit{flat vertex folds}, with the additional requirement that the sector angles between the creases are all equal.  We denote such a crease pattern by $A_{2n}$ where $2n$ is the degree of the vertex.  In Section~\ref{sec:background}, we provide  background on flat origami and show that $\OFG(A_{2n})$ serves as a maximal ``superset" graph for flat vertex folds--if $C$ is any other flat vertex fold of degree $2n$, then $\OFG(C)$ is a subgraph of $\OFG(A_{2n})$.
In Sections~\ref{sec:aea-connected} and \ref{sec:diameter}, we prove that $\OFG(A_{2n})$ is connected using two different algorithms for finding paths in this graph, one of which further proves that the diameter of $\OFG(A_{2n})$ is $n$. In Section~\ref{sec:edges} we describe an algorithm for computing the size of $\OFG(A_{2n})$, generating a sequence that is not in the Online Encyclopedia of Integer Sequences, and find a formula for this as well as for the degree sequence of $\OFG(A_{2n})$.     We conclude with open questions and a discussion of future work.

\section{Background and maximality of $\OFG(A_{2n})$}\label{sec:background}

Let $(A_{2n},P)$ denote the crease pattern that contains only one vertex $v$ in the interior of $P$, where $v$ has degree $2n$ and the angles between consecutive creases around $v$ are all equal (to $\pi/n$).  We normally let $P$ be a disc with $v$ at the center.  Let $e_1,\ldots, e_{2n}$ denote the creases in $A_{2n}$ and $\alpha_i$ denote the face between $e_i$ and $e_{i+1}$ (with the indices taken cyclically, so $\alpha_{2n}$ is between $e_{2n}$ and $e_1$).

A basic result from flat origami theory is \emph{Maekawa's Theorem}, which states that, if $v$ is a vertex in a flat-foldable crease pattern with valid MV assignment $\mu$, then the difference between the number of mountain and valley creases at $v$ under $\mu$ must be two, often denoted by $M-V=\pm 2$ \cite{basics}.  However, in the case of the crease pattern $A_{2n}$ Maekawa's Theorem is stronger:

\begin{theorem}[Maekawa for $A_{2n}$]\label{thm:Maekawa}
A MV assignment $\mu$ on $A_{2n}$ is valid if and only if
$$\sum_{i=1}^{2n} \mu(e_i) = \pm 2.$$
\end{theorem}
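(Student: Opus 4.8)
The plan is to prove both directions using the standard machinery of flat vertex folds. For the forward direction, suppose $\mu$ is valid on $A_{2n}$. Since all sector angles equal $\pi/n$, the vertex $v$ is in particular a flat-foldable vertex, so the classical Maekawa theorem applies and gives $M - V = \pm 2$, where $M$ and $V$ count mountains and valleys. Writing this in terms of $\mu$, we have $\sum_{i=1}^{2n}\mu(e_i) = M - V = \pm 2$, which is exactly the claimed equation. So the forward direction is essentially immediate from the classical result stated in the excerpt.

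The substance is the converse: if $\sum_{i=1}^{2n}\mu(e_i) = \pm 2$, then $\mu$ is valid. First I would recall the combinatorial characterization of valid MV assignments at a single vertex (the result that validity is checkable in linear time, via the Kawasaki/Justin "big-little-big" conditions): a MV assignment is valid if and only if, for every locally minimal sequence of equal consecutive sector angles, the surrounding creases satisfy the appropriate mountain-valley balance. In $A_{2n}$ every sector angle is equal to $\pi/n$, so \emph{every} maximal run of angles is "locally minimal" in a degenerate sense — and the key simplification is that the layer-ordering obstructions collapse. The concrete approach: think of folding $A_{2n}$ by first collapsing the $2n$ equal sectors into a single stack of $2n$ layers fanned around $v$; because all angles are equal, the only constraint on a valid assignment is that the creases alternate appropriately so that no two consecutive layers are forced to interpenetrate. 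I would show that the condition $\sum \mu(e_i) = \pm 2$ is equivalent, after accounting for the cyclic structure, to the existence of a valid layer ordering. One clean way: induct on $n$, using the fact (true for equal-angle vertices) that whenever $\sum\mu(e_i)=\pm 2$ there must exist an adjacent pair $e_i, e_{i+1}$ with $\mu(e_i)\ne\mu(e_{i+1})$; folding that sector flat reduces $A_{2n}$ to (a crease pattern combinatorially equivalent to) $A_{2n-2}$, the MV-sum is preserved as $\pm 2$, and by induction the reduced assignment is valid, which lifts back to a valid folding of the original. The base case $n=1$ (a single crease through $v$, i.e. $2$ collinear creases) is trivial to check directly.

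The main obstacle I anticipate is justifying the reduction step rigorously — specifically, that folding one flat sector of $A_{2n}$ genuinely yields a valid flat vertex fold of degree $2n-2$ with equal angles (the two merged sectors combine to a single sector of angle $2\pi/n$, which is \emph{not} $\pi/(n-1)$, so the reduced pattern is only an unequal-angle vertex, not a copy of $A_{2n-2}$). This means the induction cannot stay inside the family $\{A_{2n}\}$; I would instead need the induction hypothesis to be the classical single-vertex validity criterion for \emph{arbitrary} flat vertex folds, and argue that after the reduction the relevant "big-little-big" conditions are automatically satisfied because the merged sector of angle $2\pi/n$ is strictly larger than all remaining sectors of angle $\pi/n$. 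An alternative that sidesteps induction entirely is to exhibit an explicit valid layer ordering: given $\mu$ with $\sum\mu(e_i)=\pm 2$, directly construct a permutation of the $2n$ paper layers compatible with $\mu$ and verify non-crossing by hand; I expect the explicit construction to be the cleanest route, with the verification that it respects all mountain/valley constraints being the only genuinely computational part.
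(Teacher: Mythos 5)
Your overall strategy is the same as the paper's: necessity follows from the classical Maekawa theorem, and sufficiency is proved by locating an adjacent pair $e_i,e_{i+1}$ with $\mu(e_i)\ne\mu(e_{i+1})$ (which exists whenever the sum is $\pm 2$ and $n>1$), crimping it, and inducting downward. The gap is in the reduction step, which you correctly identify as the crux but then resolve incorrectly. You claim the crimp merges sectors into one of angle $2\pi/n$, so the reduced pattern leaves the equal-angle family and you must fall back on the general validity criterion for arbitrary flat vertex folds. That is not what happens: the crimp folds the sector $\alpha_i$ back on itself, turning the paper into a \emph{cone} on which the net angular extent from $e_{i-1}$ to $e_{i+2}$ is $\alpha_{i-1}-\alpha_i+\alpha_{i+1}=\pi/n$, not $2\pi/n$. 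The reduced pattern therefore has $2n-2$ creases with all sector angles still equal to $\pi/n$, living on a cone of angle $(2n-2)\pi/n$ --- a cone copy of $A_{2(n-1)}$ --- and the MV sum remains $\pm 2$ because the crimp removes one mountain and one valley. The induction thus stays entirely inside the family of equal-angle (cone) vertices and bottoms out at two creases of like parity, which fold flat trivially; un-crimping then lifts each flat folding back up. No appeal to the general single-vertex criterion is needed.

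The patches you propose instead are not viable as written. The ``big-little-big''-style characterization you invoke is not by itself a complete validity criterion for arbitrary flat vertex folds (the actual criterion is itself a recursive crimping argument, so leaning on it here is close to circular and certainly not a reduction to anything established in the paper), and the assertion that the relevant conditions are ``automatically satisfied because the merged sector is strictly larger than all remaining sectors'' is not justified. The alternative of writing down an explicit layer ordering is only gestured at; verifying non-self-intersection of a proposed layer ordering is exactly the hard content of the theorem and cannot be dismissed as a routine computation. As written, the sufficiency direction is not established.
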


\begin{figure}
    \centering
    \includegraphics[width=\linewidth]{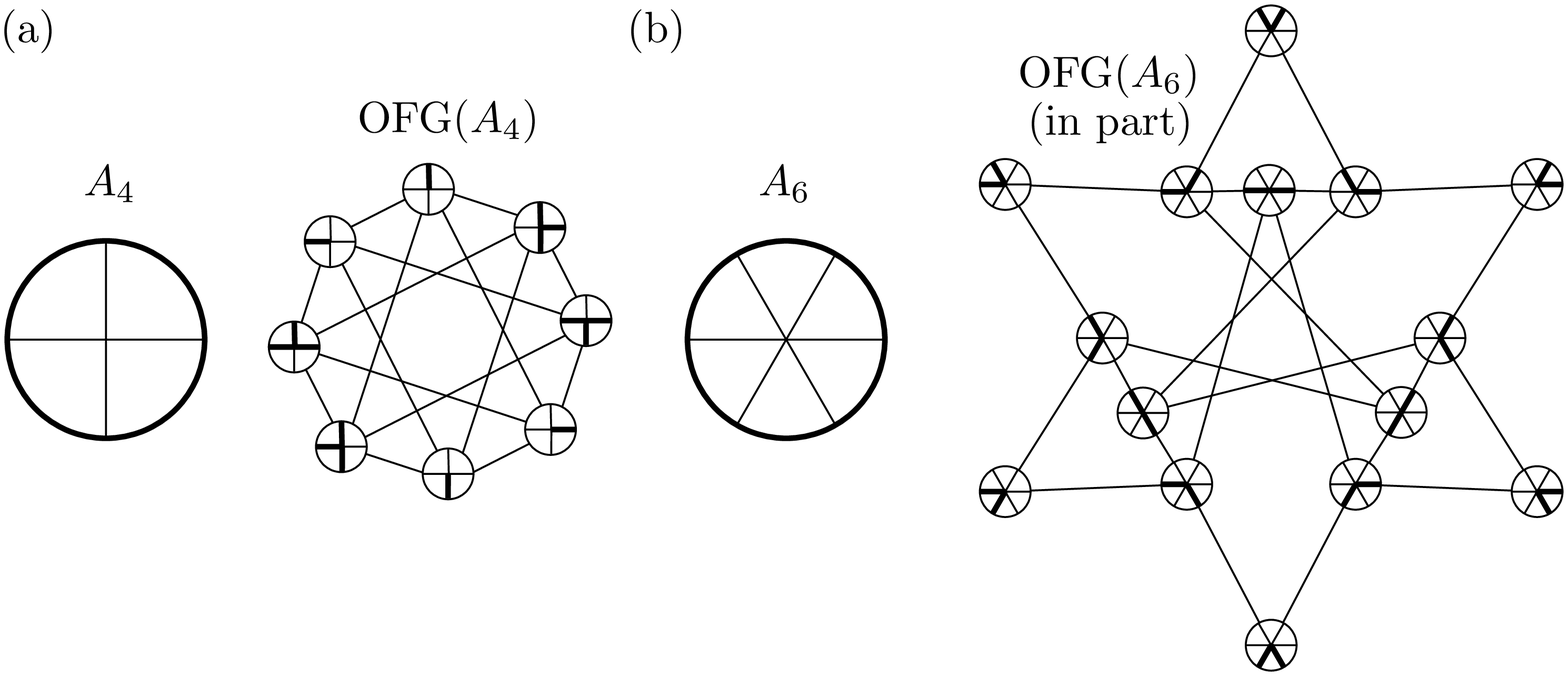}
    \caption{The crease patterns (a) $A_4$ and (b) $A_6$ along with their origami flip graphs (for $\OFG(A_6)$ only the vertices with $M-V=-2$ are shown).  Each vertex is labeled with the valid MV assignment to which it corresponds (bold/non-bold means mountain/valley, respectively).}
    \label{fig:1}
\end{figure}

Theorem~\ref{thm:Maekawa} is proved in \cite{faceflips,Origametry}, but a summary of the sufficient direction is: Find a pair of consecutive creases $e_i, e_{i+1}$ in $A_{2n}$ with $\mu(e_i)\not= \mu(e_{i+1})$ and fold them (making a ``crimp") to turn the paper into a cone, on which we now have the crease pattern $A_{2(n-1)}$ and a MV assignment that still has $\sum \mu(e) = \pm 2$.  Repeat this process until there are only two creases left, which must both be mountains or both be valleys.  This gives us a flat folding of the original vertex $A_{2n}$.

Examples of the origami flip graphs $\OFG(A_4)$ and $\OFG(A_6)$ are shown in Figure~\ref{fig:1}, although in the latter case only half of the vertices (those whose MV assignment satisfies $\sum \mu(e) = -2$) are shown.  The vertices in these graphs are labeled with their corresponding valid MV assignment, where bold creases are mountains and non-bold means valley, a convention we will use throughout this paper.  In \cite{ffconnected}, it is proved that $\OFG(C)$ is bipartite whenever $C$ is a flat-foldable, single-vertex crease pattern, although we will not be making particular use of that here.

Theorem~\ref{thm:Maekawa} tells us that any MV assignment of $A_{2n}$ that satisfies $M-V=\pm 2$ will be valid.  Therefore, there are $2\binom{2n}{n-1}$ vertices in $\OFG(A_{2n})$.

We will now show that the origami flip graph of $A_{2n}$ has maximal size over all origami flip graphs of flat vertex folds of degree $2n$, and further that such origami flip graphs are all subgraphs of $\OFG(A_{2n})$.  The idea is that when all the sector angles of a flat vertex fold are equal, the only requirement for a MV assignment to be valid is that it satisfies Maekawa's Theorem.  If, on the other hand, the sector angles are not all equal, then other restrictions will apply.  For example, if a flat-foldable, single-vertex crease pattern $C$ has consecutive sector angles $\alpha_{i-1}, \alpha_i, \alpha_{i+1}$ where $\alpha_i$ is strictly smaller than both $\alpha_{i-1}$ and $\alpha_{i+1}$, then the creases $e_i$ and $e_{i+1}$ bordering $\alpha_i$ must have different MV parity, so $\mu(e_i)\ne\mu(e_{i+1})$ must hold in any valid MV assignment $\mu$ of $C$.  (This is known as the Big-Little-Big Lemma; see \cite{basics}.)  This implies that the faces $\alpha_{i-1}$ and $\alpha_{i+1}$ can \textit{never} be individually flipped under a valid MV assignment $\mu$, since doing so would make $\mu(e_i)=\mu(e_{i+1})$.  Other restrictions on when faces in a single-vertex crease pattern can be flipped are detailed in \cite{ffconnected}, but since $A_{2n}$ does not have such restrictions, its origami flip graph will have the most edges possible.  Examples of this when $2n=4$ are shown in Figure~\ref{fig:1.5}. We formalize and slightly expand this in the following Theorem.

\begin{figure}
    \centering
    \includegraphics[width=\linewidth]{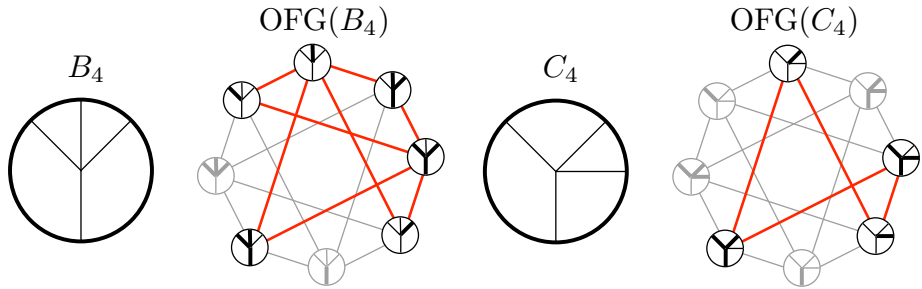}
    \caption{Other flat vertex folds $B_4$ and $C_4$ of degree 4 and their origami flip graphs, viewed as subgraphs of $\OFG(A_4)$.}
    \label{fig:1.5}
\end{figure}

\begin{theorem}\label{thm:maximal}
Let $C$ be a flat-foldable, single-vertex crease pattern of degree $2n$ that is not $A_{2n}$.  Then $\OFG(C)$ is isomorphic to at least $2n$ distinct subgraphs of $\OFG(A_{2n})$.
\end{theorem}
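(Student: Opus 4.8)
The plan is to realize $\OFG(C)$ explicitly as a subgraph of $\OFG(A_{2n})$ and then move that copy around using the symmetries of $A_{2n}$.

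First I would set up the natural correspondence of creases. Fix a cyclic labeling $e_1,\dots,e_{2n}$ of the creases of $C$ and identify $e_i$ with the crease of $A_{2n}$ carrying the same label; this also identifies the sector of $C$ between $e_i$ and $e_{i+1}$ with the corresponding sector $\alpha_i$ of $A_{2n}$. Under this identification an MV assignment of $C$ is literally an MV assignment of $A_{2n}$. Every valid MV assignment $\mu$ of $C$ satisfies Maekawa's Theorem, $\sum_i\mu(e_i)=\pm 2$, so by Theorem~\ref{thm:Maekawa} it is valid for $A_{2n}$; hence the vertex set of $\OFG(C)$ lies inside that of $\OFG(A_{2n})$. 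A flip of a face $\alpha_i$ toggles exactly the two cyclically consecutive creases $e_i,e_{i+1}$, and this is the same operation whether $\alpha_i$ is viewed as a face of $C$ or of $A_{2n}$, so an edge of $\OFG(C)$ — two valid-for-$C$ assignments differing by one face flip — is an edge of $\OFG(A_{2n})$. Thus $\OFG(C)$ is a subgraph of $\OFG(A_{2n})$; in fact an \emph{induced} subgraph, since two valid-for-$C$ assignments differing by a single face flip of $A_{2n}$ already differ by a single face flip of $C$, so each copy we produce will be determined by its vertex set.

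Next I would generate many such subgraphs. The dihedral symmetry group $D$ of the star $A_{2n}$ has order $4n$ (the $2n$ rotations by multiples of $\pi/n$, and the $2n$ reflections), and each $g\in D$ permutes the crease labels, hence maps valid MV assignments to valid ones and consecutive crease pairs to consecutive crease pairs, so it acts as a graph automorphism of $\OFG(A_{2n})$. Applying $g$ to the subgraph above gives a subgraph $g\cdot\OFG(C)$ of $\OFG(A_{2n})$ isomorphic to $\OFG(C)$. Since these are induced, $g\cdot\OFG(C)=h\cdot\OFG(C)$ exactly when $g^{-1}h$ stabilizes setwise the set $S$ of valid MV assignments of $C$, i.e. lies in $K=\{g\in D:\, g(S)=S\}$; so the construction produces $|D|/|K|=4n/|K|$ distinct copies, and it would suffice to show $|K|\le 2$.

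I expect the distinctness bound $|K|\le 2$ to be the main obstacle, and it is precisely here that $C\neq A_{2n}$ must enter. One inclusion is easy: every geometric symmetry of $C$ relabels flat foldings to flat foldings, so $\mathrm{Aut}(C)\subseteq K$. The content is (i) the converse — that a relabeling preserving the whole \emph{set} of valid MV assignments is forced to be a symmetry of the sector-angle sequence of $C$ — and (ii) that $C\neq A_{2n}$ keeps this stabilizer small. For a crease pattern having a strictly smallest sector among three consecutive ones, (ii) is immediate from the Big-Little-Big restriction recalled just before the statement: the two faces flanking the small sector are never flippable in $\OFG(C)$, so each copy $g\cdot\OFG(C)$ carries a distinguished pair of crease positions, and the $2n$ rotations move this marked data to $2n$ distinct positions, forcing the rotated copies to be pairwise distinct (hence at least $2n$ copies). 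The genuinely delicate case is the highly symmetric one — where every run of locally minimal sectors has length at least two — in which the dihedral action alone can produce fewer copies; here one must work harder, e.g. by combining the dihedral action with the global mountain–valley reversal $\mu\mapsto-\mu$ and a finer invariant of the copy's position inside $\OFG(A_{2n})$, or by exploiting automorphisms of $\OFG(A_{2n})$ beyond the dihedral ones. Settling (i) and this remaining case analysis is the crux of the proof.
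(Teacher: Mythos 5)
The first half of your argument --- identifying the creases of $C$ with those of $A_{2n}$, using Maekawa's Theorem together with Theorem~\ref{thm:Maekawa} to see that every valid MV assignment of $C$ is valid for $A_{2n}$, and checking that a face flip of $C$ is literally a face flip of $A_{2n}$ --- is exactly the paper's proof, and your added observation that the copy is an \emph{induced} subgraph is correct and is a useful sharpening the paper does not state.

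The two arguments part ways on the multiplicity count, and here your proposal has a genuine (and self-acknowledged) gap. The paper uses only the $2n$ rotations and asserts distinctness from $C\ne A_{2n}$ in a single sentence; you upgrade to the dihedral group of order $4n$ and reduce the claim to showing $|K|\le 2$ for the setwise stabilizer $K$. But that reduction cannot be carried out in general: take $2n=12$ with sector angles $(10^\circ,20^\circ,50^\circ,40^\circ)$ repeated three times. This vertex is flat-foldable (its alternating angle sum vanishes), it is not $A_{12}$, and rotation by four creases is a symmetry of $C$, so $K$ contains a subgroup of order $3$ and the dihedral orbit of the standard copy has size at most $24/3=8<12$. (Already for $2n=8$, the angles $(10^\circ,30^\circ,80^\circ,60^\circ)$ repeated twice show that the rotation orbit alone --- which is all the paper uses --- has size at most $4<8$.) So for rotationally periodic $C$ neither the rotations nor the full dihedral action can produce $2n$ distinct copies, and one would need embeddings not obtained by applying symmetries of $A_{2n}$ to the standard copy. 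Your point (i), that $K$ might strictly contain $\mathrm{Aut}(C)$, is a further unaddressed issue, but it is secondary to this orbit-size obstruction. In short: your embedding step is complete and matches the paper; your distinctness step is not merely unfinished, as you say, but the specific target you reduce to ($|K|\le 2$) is false for some $C$, so completing the proof along these lines would require a genuinely new idea. To be fair, you have put your finger on a real weakness: the paper's own one-line justification of distinctness does not address these periodic cases either.
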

\begin{proof}
Suppose we have an arbitrary flat vertex fold $C$ with degree $2n$, creases $c_1, \ldots, c_{2n}$, and angles $\beta_i$. Let $\nu$ be a valid MV assignment of $C$. Then, $\nu$ also represents a valid MV assignment for $A_{2n}$.  Specifically, if $e_1,\ldots,e_{2n}$ are the creases in $A_{2n}$ and we define $\mu$ by $\mu(e_i)=\nu(c_i)$, then $\mu$ will be a valid MV assignment on $A_{2n}$ by Theorem~\ref{thm:Maekawa} (since $\nu$ must satisfy Maekawa's Theorem).  

Thus, we have a mapping $f$ between all MV assignments $\nu$ of $C$ and some MV assignments $\mu$ of $A_{2n}$ ($f(\nu) = \mu$). If $\{\nu, \nu_{\beta_i}\}$ is an edge of $\OFG(C)$ (where $\beta_i$ is flipped to make this edge), then $\{f(\nu), f(\nu_{\beta_i})\}$ is an edge of $\OFG(A_{2n})$. That is,  $\nu(c_i) = -\nu_{\beta_i}(c_i)$ and $\nu(c_{i+1}) = -\nu_{\beta_i}(c_{i+1})$ and $\nu(c) = \nu_{\beta_i}(c)$ for all other creases $c$ of $C$. The same relationship holds true between $f(\nu)$ and $f(\nu_{\beta_i})$. That is, flipping the corresponding face $\alpha_i$ (between $e_i$ and $e_{i+1}$ in $A_{2n}$) in $f(\nu)$ will result in $f(\nu_{\beta_i})$. This can be written as $f(\nu)_{\alpha_i} = f(\nu_{\beta_i})$, which implies that $\{f(\nu), f(\nu_{\beta_i})\}$ is an edge of $\OFG(A_{2n})$. Therefore, $\OFG(C)$ is isomorphic to a subgraph of $\OFG(A_{2n})$.

Furthermore, our labeling of the creases $e_i$ in $A_{2n}$ was arbitrary, and by the rotational symmetry of $A_{2n}$ we had $2n$ different ways we could have done this, resulting in at least $2n$ distinct copies of $\OFG(C)$ (since $C\ne A_{2n}$) that may be found in $\OFG(A_{2n})$.
\end{proof}

If $\mu$ is a valid MV assignment for a crease pattern $C$, then we say that a face $F$ of $C$ is \emph{flippable under $\mu$} if $\mu_F$ is also a valid MV assignment for $C$. 
In what follows, we will make extensive use of the following Lemma.

\begin{lemma}\label{lem:flip}
Let $\mu$ be a valid MV assignment of $A_{2n}$.  Then a face $\alpha_k$ is not flippable under $\mu$ if and only if $\mu(e_k)=\mu(e_{k+1})\not=$ sign$(\sum \mu(e_i))$.
\end{lemma}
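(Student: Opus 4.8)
The plan is to reduce everything to Theorem~\ref{thm:Maekawa}, which characterizes validity on $A_{2n}$ purely by the crease-sum being $\pm 2$. Write $S = \sum_{i=1}^{2n}\mu(e_i)$; since $\mu$ is valid we have $S \in \{-2,2\}$, so in particular $\mathrm{sign}(S)$ is well-defined. Flipping $\alpha_k$ replaces $\mu(e_k)$ and $\mu(e_{k+1})$ by their negatives and leaves every other crease unchanged, so the crease-sum of $\mu_{\alpha_k}$ is
$$S' \;=\; S - 2\mu(e_k) - 2\mu(e_{k+1}).$$
By Theorem~\ref{thm:Maekawa} again, $\alpha_k$ is flippable under $\mu$ if and only if $S' \in \{-2,2\}$. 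Hence the entire lemma collapses to a finite case check on the two values $\mu(e_k),\mu(e_{k+1})\in\{-1,1\}$ together with the sign of $S$, and the same computation proves both directions of the biconditional at once.

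I would then run the three cases. (i) If $\mu(e_k)\neq\mu(e_{k+1})$, then $\mu(e_k)+\mu(e_{k+1})=0$, so $S'=S\in\{-2,2\}$ and $\alpha_k$ is always flippable; this is consistent with the stated criterion, whose right-hand side requires $\mu(e_k)=\mu(e_{k+1})$. (ii) If $\mu(e_k)=\mu(e_{k+1})=1$, then $S'=S-4$, which lies in $\{-2,2\}$ exactly when $S=2$; so $\alpha_k$ is non-flippable iff $S=-2$, i.e. iff $\mathrm{sign}(S)=-1\neq 1=\mu(e_k)=\mu(e_{k+1})$. (iii) If $\mu(e_k)=\mu(e_{k+1})=-1$, then $S'=S+4\in\{-2,2\}$ exactly when $S=-2$, so $\alpha_k$ is non-flippable iff $S=2$, i.e. iff $\mathrm{sign}(S)=1\neq -1=\mu(e_k)=\mu(e_{k+1})$. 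Combining (i)--(iii): $\alpha_k$ is not flippable under $\mu$ precisely when $\mu(e_k)=\mu(e_{k+1})\neq\mathrm{sign}(S)$, which is the claim.

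There is no substantive obstacle here beyond careful bookkeeping; the two things worth flagging explicitly are that $\mathrm{sign}(S)$ makes sense only because validity of $\mu$ forces $S\neq 0$, and that we are free to treat ``flippable'' via the Maekawa crease-sum condition — thanks to Theorem~\ref{thm:Maekawa} — rather than via an explicit folded state, so the argument never has to reason about paper self-intersection directly.
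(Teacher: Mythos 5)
Your proposal is correct and follows essentially the same route as the paper: both reduce flippability to the crease-sum condition of Theorem~\ref{thm:Maekawa} and observe that $\sum\mu_{\alpha_k}(e_i)=\sum\mu(e_i)-2\mu(e_k)-2\mu(e_{k+1})$, so the sum leaves $\{-2,2\}$ exactly when $e_k$ and $e_{k+1}$ agree and are in the minority. Your version is simply a more explicit write-up of the case analysis the paper sketches with one example.
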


\begin{proof}
By Theorem~\ref{thm:Maekawa},  $\mu_{\alpha_k}$ will be an invalid MV assignment if and only if $\sum \mu_{\alpha_k}(e_i)\not= \pm 2$.  This will only happen if $\mu(e_k)=\mu(e_{k+1})$ (i.e., the creases that border $\alpha_k$ have the same MV assignment under $\mu$) and this value, $\mu(e_k)$, is different from the majority of the creases in $\mu$.  For example, if $\sum \mu(e_i) = 2$ and $\mu(e_k)=\mu(e_{k+1})=-1$, then $\sum \mu_{\alpha_k}(e_i) = 6$, meaning that $\mu_{\alpha_k}$ violates Theorem~\ref{thm:Maekawa} and thus is invalid.  All other possibilities for $\mu(e_k)$ and $\mu(e_{k+1})$ preserve the MV summation invariant and thus allow $\alpha_k$ to be flippable under $\mu$.
\end{proof}

We will utilize the following definition in Section~\ref{sec:diameter}:  given two MV assignments $\mu$ and $\nu$ of $A_{2n}$, let $S(\mu,\nu)$ denote the set of creases $e_1,\ldots, e_{2n}$ with $\mu(e_i)\ne \nu(e_i)$.  This set is useful because it provides us with a quantity that is face-flip invariant.

\begin{lemma}\label{lem:S}
The parity of $|S(\mu,\nu)|$ (the size of $S(\mu,\nu)$) is invariant under face flips.  That is, if $\mu$ and $\nu$ are valid MV assignment of $A_{2n}$, then the $|S(\mu,\nu)|$ will have the same even/odd parity as $|S(\mu_{\alpha_i},\nu)|$ for any face $\alpha_i$ of $A_{2n}$.
\end{lemma}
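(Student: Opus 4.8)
The plan is to observe that a single face flip changes an MV assignment on exactly two creases, so it can only change $|S(\mu,\nu)|$ by an even amount, which immediately gives the claimed parity invariance.

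First I would recall precisely what flipping the face $\alpha_i$ does: it switches the MV parity of exactly the two creases $e_i$ and $e_{i+1}$ bordering $\alpha_i$, and leaves every other crease unchanged, i.e. $\mu_{\alpha_i}(e_i)=-\mu(e_i)$, $\mu_{\alpha_i}(e_{i+1})=-\mu(e_{i+1})$, and $\mu_{\alpha_i}(e_j)=\mu(e_j)$ for all $j\notin\{i,i+1\}$. Consequently, for every $j\notin\{i,i+1\}$, the crease $e_j$ lies in $S(\mu_{\alpha_i},\nu)$ if and only if it lies in $S(\mu,\nu)$, so these creases contribute identically to the two sets.

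Next I would handle the two special creases by casework. Since $\mu_{\alpha_i}(e_i)=-\mu(e_i)$, the crease $e_i$ reverses its membership status: $e_i\in S(\mu_{\alpha_i},\nu)$ iff $e_i\notin S(\mu,\nu)$, and likewise for $e_{i+1}$. Depending on whether $0$, $1$, or $2$ of $\{e_i,e_{i+1}\}$ originally lay in $S(\mu,\nu)$, the quantity $|S(\mu_{\alpha_i},\nu)|-|S(\mu,\nu)|$ equals $+2$, $0$, or $-2$ respectively; in every case it is even, so $|S(\mu_{\alpha_i},\nu)|$ and $|S(\mu,\nu)|$ have the same parity. Equivalently, one can phrase the whole argument in one line: $S(\mu_{\alpha_i},\nu)$ is the symmetric difference of $S(\mu,\nu)$ with the two-element set $\{e_i,e_{i+1}\}$, and taking symmetric difference with an even-size set preserves the parity of cardinality.

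I do not expect any genuine obstacle here; the only point worth a remark is that the statement holds whether or not $\mu_{\alpha_i}$ itself happens to be valid, since the parity computation above never invokes Theorem~\ref{thm:Maekawa} or validity of $\mu_{\alpha_i}$ at all — it is purely combinatorial, using only that $\mu$ and $\nu$ are fixed assignments and that a face flip toggles exactly two coordinates.
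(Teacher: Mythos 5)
Your proof is correct and follows essentially the same argument as the paper: a face flip toggles exactly the two creases bordering $\alpha_i$, so $|S(\mu,\nu)|$ changes by $0$ or $\pm 2$, preserving parity. Your symmetric-difference phrasing and the observation that validity of $\mu_{\alpha_i}$ is never needed are nice additions, but the underlying reasoning is identical.
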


\begin{proof}
Suppose we flip a face $\alpha_i$ of $A_{2n}$ under $\mu$.  Then we are changing the MV assignments of two creases.  This will change the size of $S(\mu,\nu)$ by either 0 (if exactly one of $e_i$ and $e_{i+1}$ is different between $\mu$ and $\nu$) or 2 (if $e_i$ and $e_{i+1}$ are both the same or both different between $\mu$ and $\nu$).  Therefore, the parity of $|S(\mu,\nu)|$ is invariant under face flips. 
\end{proof}

\section{Connectivity of $\OFG(A_{2n})$}\label{sec:aea-connected}


In this section, we present an algorithm for face-flipping between any two valid MV assignments $\mu$ and $\nu$ of $A_{2n}$.  This will prove that $\OFG(A_{2n})$ is connected.  In contrast, if $C$ is an arbitrary flat vertex fold, then $\OFG(C)$ is not always connected.  We invite the reader to verify that the degree-6 flat vertex fold with sector angles  $(45^\circ, 15^\circ, 60^\circ, 85^\circ, 75^\circ, 80^\circ)$ has two disconnected 4-cycles for its origami flip graph.  (Determining the connectivity of $\OFG(C)$ for general flat vertex folds $C$ is quite convoluted and beyond the scope of this paper; see \cite{ffconnected} for details.)

In the algorithm, we start with crease $e_1$.  If $\mu(e_1)=\nu(e_1)$, then we move on to crease $e_2$.  If $\mu(e_1)\not=\nu(e_1)$, then we would like to flip the face $\alpha_1$, since $\mu_{\alpha_1}(e_1) = \nu(e_1)$, and then continue the algorithm on crease $e_2$ comparing $\mu_{\alpha_1}$ with $\nu$.  

However, if $e_1$ and $e_2$ have the same MV assignment under $\mu$, then $\alpha_1$ might not be flippable under $\mu$ if it falls under Lemma~\ref{lem:flip}; such a $\mu$ and $\alpha_1$ are shown in Figure~\ref{fig:shwoop}.  Since $\alpha_1$ is not flippable, we move to $\alpha_2$ and check to see if it is flippable under $\mu$.  If so, then we flip it, and doing so will make $\alpha_1$ flippable (since it will no longer satisfy Lemma~\ref{lem:flip}).  Then we have $\mu_{\alpha_2,\alpha_1}(e_1)=\nu(e_1)$, and we may proceed with crease $e_2$ comparing $\mu_{\alpha_2,\alpha_1}$ and $\nu$.  If $\alpha_2$ is not flippable, then we try to flip the next face, $\alpha_3$.  Eventually we will find some face $\alpha_i$ that can be flipped (otherwise $\mu$ would be all mountain or all valley creases and violate Maekawa's Theorem) and then we can flip the sequence of faces $\alpha_i, \alpha_{i-1}, \alpha_{i-2}, \ldots, \alpha_1$.  We call this sequence of flipping faces in order to make $\mu_{\alpha_i,\ldots,\alpha_1}(e_1)=\nu(e_1)$ a \emph{shwoop}, and an example of such a shwoop is shown in Figure~\ref{fig:shwoop}.

\begin{figure}
\centerline{\includegraphics[width=\linewidth]{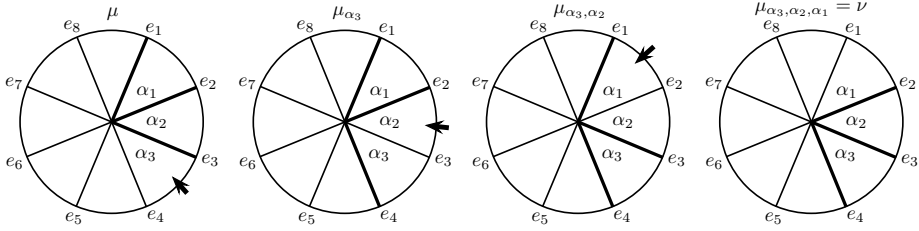}}
\caption{An example of a shwoop sequence of face flips that converts $\mu$ to $\nu$.}\label{fig:shwoop}
\end{figure}

Thus, our algorithm is to start by comparing $\mu(e_1)$ and $\nu(e_1)$, flipping $\alpha_1$ or performing a shwoop to make them agree on $e_1$ if needed, and then moving on to $e_2$, and so on.  We call this algorithm {\sc FEA-Shwoop$(A_{2n}, \mu,\nu)$}, and pseudocode for it is shown in Algorithm~\ref{alg:FEA}.  (FEA stands for Flipping Equal Angles.)

\begin{algorithm}
\DontPrintSemicolon
\centerline{{\sc FEA-Shwoop}($A_{2n}, \mu,\nu$)}
{\hrule width 0.94\linewidth}\vspace{0.8em}
Let $S=\{ \}$, $\eta=\mu$ \\
\For {$i=1$ \KwTo $2n-1$} {
   Let $m=0$\\
       \If {$\eta(e_i)\not=\nu(e_i)$} {
            \uIf {\textnormal {face $\alpha_i$ of $A_{2n}$ is flippable under $\eta$}}{
            Replace $\eta$ with $\eta_{\alpha_i}$\\
            Append $\alpha_i$ to $S$
            }
        \Else{\While{\textnormal{ face $\alpha_i$ of $A_{2n}$ is not flippable under $\eta$}}{
                 Let $m=m+1$, $i=i+1$}
             Replace $\eta$ with $\eta_{\alpha_i}$\\
            Append $\alpha_i$ to $S$\\
            \For{$j=m$ \KwTo $1$}
            {   Let $i=i-1$  \tcp{This is the shwoop.}
                 Replace $\eta$ with $\eta_{\alpha_i}$\\
            Append $\alpha_i$ to $S$
            }  
         }   }
}
Output $S$
\vspace{2mm}\noindent{\hrule width 0.94\linewidth} \vspace{1mm} 
\caption{The FEA (Flipping-Equal-Angles) Shwoop algorithm.}
\label{alg:FEA}
\end{algorithm}

\begin{theorem}\label{thm:shwoop alg}
The {\sc FEA-Shwoop$(A_{2n}, \mu,\nu)$} algorithm inputs two valid MV assignments for $A_{2n}$ and outputs a sequence of faces that, when flipped in order, will provide a sequence of valid MV assignments that start with $\mu$ and end with $\nu$.
\end{theorem}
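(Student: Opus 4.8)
The plan is to establish, by induction on the outer {\tt for}-loop counter $i$, the loop invariant: at the start of iteration $i$, $\eta$ is a valid MV assignment of $A_{2n}$, it equals the result of flipping (in order) the faces currently in $S$ starting from $\mu$, and $\eta(e_j)=\nu(e_j)$ for all $j<i$. The base case $i=1$ holds since $\eta=\mu$ is valid and $S$ is empty. For the inductive step, the body of the loop has three cases. If $\eta(e_i)=\nu(e_i)$, nothing is changed and the invariant passes to $i+1$. If $\eta(e_i)\ne\nu(e_i)$ and $\alpha_i$ is flippable under $\eta$, then $\eta_{\alpha_i}$ is valid, $\eta_{\alpha_i}(e_i)=-\eta(e_i)=\nu(e_i)$ (MV values are $\pm1$), and $\eta_{\alpha_i}$ still agrees with $\nu$ on $e_1,\dots,e_{i-1}$ because $\alpha_i$ borders only $e_i$ and $e_{i+1}$; again the invariant passes to $i+1$.

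The substantive case is the shwoop: $\eta(e_i)\ne\nu(e_i)$ and $\alpha_i$ is not flippable. Normalising --- WLOG, by the mountain$\leftrightarrow$valley symmetry --- so that $\mathrm{sign}\!\big(\sum_k\eta(e_k)\big)=+1$ (thus $\eta$ has $n+1$ mountains, $n-1$ valleys), Lemma~\ref{lem:flip} gives $\eta(e_i)=\eta(e_{i+1})=-1$, whence $\nu(e_i)=+1$. Since $\eta$ carries both signs it is not monochromatic, so going around the cycle some adjacent pair of creases has opposite signs and by Lemma~\ref{lem:flip} the corresponding face is flippable; hence the {\tt while} loop halts at a well-defined first index $i_1>i$ with $\alpha_{i_1}$ flippable. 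Applying Lemma~\ref{lem:flip} to the non-flippable faces $\alpha_i,\dots,\alpha_{i_1-1}$ forces $\eta(e_i)=\eta(e_{i+1})=\cdots=\eta(e_{i_1})=-1$, and then flippability of $\alpha_{i_1}$ forces $\eta(e_{i_1+1})=+1$. The key point is that $i_1\le 2n-1$, i.e.\ the shwoop does not wrap past $e_{2n}$ onto already-corrected creases: if $i_1\ge 2n$ then $e_i,\dots,e_{2n}$ are all valleys, so $2n-i+1\le n-1$ and $i\ge n+2$; using the invariant, $\sum_{j<i}\nu(e_j)=\sum_{j<i}\eta(e_j)=2+(2n-i+1)$, while $\sum_{j\ge i}\nu(e_j)\ge\nu(e_i)-(2n-i)=1-(2n-i)$, so $\sum_k\nu(e_k)\ge4$, contradicting $\sum_k\nu(e_k)=\pm2$. (This also shows the shwoop case cannot arise when $i=2n-1$.)

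It remains to check that the shwoop --- flipping $\alpha_{i_1},\alpha_{i_1-1},\dots,\alpha_i$ in this order, exactly what the algorithm appends to $S$ --- passes through valid MV assignments and restores the invariant. Track the unique mountain among the touched creases $e_i,\dots,e_{i_1+1}$: it sits at $e_{i_1+1}$, and flipping $\alpha_k$ successively for $k=i_1,i_1-1,\dots,i$ exchanges the valley immediately left of this mountain with the mountain itself, sliding it one crease to the left. Hence every face that gets flipped has oppositely-assigned borders, so by the computation in the proof of Lemma~\ref{lem:flip} each flip leaves $\sum_k\eta(e_k)=2$, and by Theorem~\ref{thm:Maekawa} each intermediate $\eta$ is valid. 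After the last flip $\alpha_i$ the mountain has reached $e_i$, so $\eta(e_i)=+1=\nu(e_i)$; the creases $e_{i+1},\dots,e_{i_1}$ were flipped twice (net unchanged), $e_{i_1+1}$ once, and $e_1,\dots,e_{i-1}$ not at all. So $\eta(e_j)=\nu(e_j)$ for all $j\le i$ and $\eta$ is valid, and the invariant passes to $i+1$.

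When the outer loop terminates, $\eta$ is valid and $\eta(e_j)=\nu(e_j)$ for $j=1,\dots,2n-1$; then $\big|\sum_k\eta(e_k)-\sum_k\nu(e_k)\big|=|\eta(e_{2n})-\nu(e_{2n})|\le2$, but this quantity lies in $\{0,4\}$ since both sums are $\pm2$, so it is $0$ and $\eta=\nu$. Thus $S$ records a list of face flips that carries $\mu$ through valid MV assignments to $\nu$. The main obstacle is precisely the shwoop analysis: proving $i_1\le 2n-1$ so that earlier corrections survive, and confirming validity is preserved along the whole shwoop --- which is where Maekawa's constraint on \emph{both} $\mu$ (and thus every intermediate $\eta$) and $\nu$ does the heavy lifting.
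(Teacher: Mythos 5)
Your proof is correct and follows essentially the same strategy as the paper's: a loop invariant on the outer iteration, a Maekawa-based contradiction showing the forward search for a flippable face must succeed before reaching $\alpha_{2n}$, and a Maekawa/parity argument forcing agreement on $e_{2n}$ at termination. Your explicit verification that every intermediate assignment during a shwoop remains valid (the ``sliding mountain'' observation) fills in a detail the paper's proof leaves implicit, but it does not constitute a different approach.
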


\begin{proof}
As previously described, the algorithm uses single face-flips and shwoops to generate a sequence of valid MV assignments of $A_{2n}$ that, starting with $\mu$, make the MV parity of creases $e_1, e_2, e_3, \ldots$, in order, agree with that of $\nu$.  We need to prove that (1) finding faces to perform a shwoop is always possible and (2) that when the algorithm terminates after $i=2n-1$, the resulting MV assignment will be $\nu$. 

Suppose we are at stage $i=k$ in the algorithm where we have valid MV assignments $\mu_F$ and $\nu$ for $A_{2n}$ where $F$ is the sequence of faces we've already flipped, $\mu_F(e_i)=\nu(e_i)$ for $i=1,\ldots, k-1$, and $\mu_F(e_k)\not= \nu(e_k)$. 


Then, if $\alpha_k$ is flippable under $\mu_F$, we flip it so that $\mu_{F\cup\{\alpha_k\}}(e_k)=\nu(e_k)$ and move on to $i=k+1$.

If we cannot flip $\alpha_k$ under $\mu_F$, then that means, for example, that $\mu_F$ is majority-mountain and $e_k$ and $e_{k+1}$ are both valleys under $\mu_F$.  So we look to see if we can flip face $\alpha_{k+1}$ under $\mu_F$.  If that's not possible, then we look at face $\alpha_{k+2}$, and continue in search of a flippable face $\alpha_{k+j}$ under $\mu_F$ for some $j$. 

Suppose we get all the way to $\alpha_{2n-1}$ without finding a flippable face under $\mu_F$.  That means that $\mu_F=\nu$ on creases $e_1,\ldots, e_{k-1}$ and, assuming $\mu_F$ is majority-mountain, that $\mu_F=-1$ (valley creases) on $e_k,\ldots, e_{2n}$ (since face $\alpha_{2n-1}$ borders the creases $e_{2n-1}$ and $e_{2n}$).  Since $\mu_F$ is a valid MV assignment, this means that $\nu$ must also be all valleys on $e_k,\ldots, e_{2n}$, for if it were anything else, then $\nu$ would have fewer valley creases than $\mu_F$ and thus violate Maekawa's Theorem.  This contradicts our assumption that $\mu_F$ and $\nu$ disagreed on crease $e_k$, and so our supposition is false.  

Thus, we will find a face $\alpha_{k+j}$ that is flippable under $\mu_F$ where $k+j$ is no more than $2n-1$.  We then flip $\alpha_{k+j}$ and perform a shwoop to be able to make a new MV assignment $\mu_{F\cup\{\alpha_{k+j},\alpha_{k+j-1},\ldots, \alpha_k\}}$ that will agree with $\nu$ on crease $e_k$.  

We now examine how the algorithm terminates. The last face that could be flipped in this algorithm is $\alpha_{2n-1}$.  Let $\mu_x$ be the last MV assignment produced up to this point (so, after step $i=2n-2$ in the algorithm).  For step $i=2n-1$, suppose that $\mu_x(e_{2n-1}) = \nu(e_{2n-1})$.  This means $\mu_x$ and $\nu$ agree on all the creases $e_1, \ldots, e_{2n-1}$, which implies that they must also agree on $e_{2n}$, for otherwise one of $\mu_x$ or $\nu$ would not satisfy Maekawa's Theorem despite both being valid. Thus $\mu_x=\nu$ and the algorithm completes successfully.

Similarly, if $\mu_x(e_{2n-1})\not=\nu(e_{2n-1})$ then we must also have that $\mu_x(e_{2n})\not=\nu(e_{2n})$.  Then flipping face $\alpha_{2n-1}$ will make $\mu_{x,\alpha_{2n-1}}=\nu$, and this face-flip must be possible because $\nu$ is a valid MV assignment for $A_{2n}$.  Thus the algorithm completes successfully in this case as well.

\end{proof}

\begin{corollary}\label{cor:A2n connected}
The flip graph $\OFG(A_{2n})$ is connected.
\end{corollary}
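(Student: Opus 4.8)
The plan is to derive Corollary~\ref{cor:A2n connected} directly from Theorem~\ref{thm:shwoop alg}. Since an origami flip graph is connected precisely when every pair of vertices can be joined by a walk, it suffices to exhibit, for any two valid MV assignments $\mu$ and $\nu$ of $A_{2n}$, a sequence of face flips transforming $\mu$ into $\nu$, where each intermediate MV assignment is also valid (so that each flip corresponds to an edge of $\OFG(A_{2n})$).

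First I would recall that the vertices of $\OFG(A_{2n})$ are exactly the valid MV assignments of $A_{2n}$, and that two vertices are adjacent iff one is obtained from the other by flipping a single face. Then I would simply invoke Theorem~\ref{thm:shwoop alg}: running {\sc FEA-Shwoop}$(A_{2n}, \mu, \nu)$ produces a sequence of faces $\alpha_{j_1}, \alpha_{j_2}, \ldots, \alpha_{j_t}$ such that flipping them in order yields a chain of valid MV assignments $\mu = \eta_0, \eta_1, \ldots, \eta_t = \nu$ with $\eta_{r} = (\eta_{r-1})_{\alpha_{j_r}}$. Each consecutive pair $\eta_{r-1}, \eta_r$ differs by exactly one face flip and both are valid, hence $\{\eta_{r-1}, \eta_r\}$ is an edge of $\OFG(A_{2n})$; concatenating these edges gives a walk from $\mu$ to $\nu$. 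Since $\mu$ and $\nu$ were arbitrary vertices, $\OFG(A_{2n})$ is connected.

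There is essentially no obstacle here: the corollary is an immediate consequence of the preceding theorem, which already did all the work (establishing that the shwoop procedure never gets stuck and that it terminates at $\nu$). The only minor point worth stating explicitly is that validity of \emph{every} intermediate assignment — not just the endpoints — is what guarantees each step is a genuine edge of the flip graph, and this is exactly what Theorem~\ref{thm:shwoop alg} asserts (``a sequence of valid MV assignments that start with $\mu$ and end with $\nu$''). So the proof is one or two sentences long, and I would write it as such rather than repeating any of the algorithmic details.
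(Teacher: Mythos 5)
Your proposal is correct and matches the paper exactly: the corollary is stated as an immediate consequence of Theorem~\ref{thm:shwoop alg}, with no separate proof given, and your one-to-two-sentence derivation (the shwoop output yields a walk of valid assignments, each consecutive pair an edge) is precisely the intended argument.
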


The {\sc FEA-Shwoop$(A_{2n}, \mu,\nu)$} algorithm uses a nested loop, each of which are $O(n)$, and therefore the running time of the whole algorithm is $O(n^2)$.

\section{Diameter of $\OFG(A_{2n})$}\label{sec:diameter}

There is a different algorithm that we could use to flip between any two valid MV assignments $\mu$ and $\nu$ of $A_{2n}$, one that also proves that the diameter of $\OFG(A_{2n})$ is $n$.  We call this algorithm {\sc FEA-Halves$(A_{2n},\mu,\nu)$}.

Recall from Section~\ref{sec:background} that, if $\mu$ and $\nu$ are two valid MV assignments of $A_{2n}$, then $S(\mu,\nu)$ is the set of creases $e_i$ with $\mu(e_i)\not=\nu(e_i)$.

\begin{lemma}\label{lem:halves1}
If $\mu$ and $\nu$ are two valid MV assignments of $A_{2n}$, then $|S(\mu,\nu)|$ is even.
\end{lemma}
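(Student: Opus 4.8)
The plan is to use the Maekawa constraint from Theorem~\ref{thm:Maekawa} together with a parity argument. Since both $\mu$ and $\nu$ are valid MV assignments of $A_{2n}$, each has $\sum_i \mu(e_i) = \pm 2$ and $\sum_i \nu(e_i) = \pm 2$. The key observation is that $\mu(e_i) \ne \nu(e_i)$ exactly when the crease contributes a difference of $\pm 2$ to $\sum_i \mu(e_i) - \sum_i \nu(e_i)$, while $\mu(e_i) = \nu(e_i)$ contributes $0$. Hence $\sum_i \mu(e_i) - \sum_i \nu(e_i)$ equals $\pm 2$ summed over the $|S(\mu,\nu)|$ creases where they disagree, so this difference has the same parity as $2|S(\mu,\nu)|$, i.e.\ it is divisible by $2$ in a way that pins down $|S(\mu,\nu)|$ modulo $2$.

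Concretely, I would write $\sum_i \mu(e_i) - \sum_i \nu(e_i) = \sum_{e_i \in S(\mu,\nu)} (\mu(e_i) - \nu(e_i))$. Each term in the right-hand sum is $\pm 2$, so the sum is $2k$ where $k$ is an integer with $|k| \le |S(\mu,\nu)|$ and $k \equiv |S(\mu,\nu)| \pmod 2$ (since each $\pm 2$ term contributes an odd multiple of $2$, namely $\pm 1$ after dividing by $2$). On the other hand, the left-hand side is $(\pm 2) - (\pm 2) \in \{-4, 0, 4\}$, which is always of the form $2k$ with $k \in \{-2, 0, 2\}$ even. Therefore $|S(\mu,\nu)| \equiv k \equiv 0 \pmod 2$, so $|S(\mu,\nu)|$ is even.

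An alternative and perhaps cleaner route is to invoke Lemma~\ref{lem:S}: the parity of $|S(\mu,\nu)|$ is invariant under face flips, and by Corollary~\ref{cor:A2n connected} (or Theorem~\ref{thm:shwoop alg}) we can face-flip from $\mu$ to $\nu$ through a sequence of valid MV assignments. Starting from $\mu = \nu$ we have $|S(\mu,\nu)| = 0$, which is even, and applying face flips one at a time preserves this parity, so $|S(\mu,\nu)|$ is even for every $\nu$ reachable from $\mu$ — which is all of them. I would probably present the direct Maekawa-based argument as the primary proof since it is self-contained and does not depend on the connectivity result, mentioning the Lemma~\ref{lem:S} argument as a remark.

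I do not anticipate a genuine obstacle here; the only thing to be careful about is the parity bookkeeping — making sure that ``a sum of $m$ terms each equal to $\pm 2$'' is correctly identified as being congruent to $2m \pmod 4$, equivalently that dividing by $2$ gives a sum of $m$ terms each $\pm 1$, whose parity equals that of $m$. Matching this against the fact that the difference of two values in $\{+2, -2\}$ lies in $\{-4, 0, 4\}$ and is thus $\equiv 0 \pmod 4$ completes the argument. This is routine but should be stated explicitly to avoid an off-by-one-in-parity slip.
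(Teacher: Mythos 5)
Your proof is correct and matches the paper's own treatment essentially exactly: the paper likewise gives the Maekawa mod-4 parity argument (comparing $\sum\mu(e_i)$ and $\sum\nu(e_i)$ restricted to $S(\mu,\nu)$) as one proof, and the Lemma~\ref{lem:S}-plus-connectivity argument as the ``more elegant'' alternative. No gaps; your explicit parity bookkeeping is just a more detailed write-up of the same idea.
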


\begin{proof}
This can be proven using only Maekawa's Theorem by considering the sums $\sum \mu(e_i)$ and $\sum \nu(e_i)$ mod 4.  That is, these two sums are equivalent mod 4, and thus so are these sums taken only over the creases in $S(\mu,\nu)$.  But we also have $\sum_{e\in S(\mu,\nu)} \mu(e) = -\sum_{e\in S(\mu,\nu)} \nu(e)$, which implies the result.  

A more elegant proof, however, uses Lemma~\ref{lem:S} and Corollary~\ref{cor:A2n connected}.  That is, $|S(\mu,\mu)|=0$, and if we already know  that $\OFG(A_{2n})$ is connected, then since the parity of $|S(\mu,\nu)|$ is invariant under face flips, all values of $|S(\mu,\nu)|$ must be even.
\end{proof}

In lieu of Lemma~\ref{lem:halves1}, let us denote $S(\mu,\nu)=\{e_{i_1},\ldots, e_{i_{2k}}\}$, where $i_1<\cdots < i_{2k}$.  For $i<j$ let us denote $B(e_i,e_j) = \{\alpha_i,\alpha_{i+1},\ldots, \alpha_{j-1}\}$, which is the set of all faces of $A_{2n}$ between creases $e_i$ and $e_j$.  Define

$$B(\mu,\nu) = B(e_{i_1},e_{i_2})\cup B(e_{i_3},e_{i_4})\cup\cdots\cup B(e_{i_{2k-1}},e_{i_{2k}})=\bigcup_{j=1}^k B(e_{i_{2j-1}},e_{i_{2j}}).$$
That is, $B(\mu,\nu)$ is a set of faces of $A_{2n}$ between pairs of creases that have different MV parity under $\mu$ and $\nu$.  The complement set $\overline{B(\mu,\nu)}$ among the faces in $A_{2n}$ will be a similar set, and thus the sets $B(\mu,\nu)$ and $\overline{B(\mu,\nu)}$ divide the faces of $A_{2n}$ into (probably not equal-sized) ``halves."

We may now summarize the {\sc FEA-Halves} algorithm:  Find a flippable face $\alpha_{j_1}\in B(\mu,\nu)$.  We then claim that $B(\mu_{\alpha_{j_1}},\nu)$ will equal $B(\mu,\nu)\setminus\{\alpha_{j_1}\}$, and so we repeat, finding a flippable face $\alpha_{j_2}\in B(\mu_{\alpha_{j_1}},\nu)$, and so on, producing an ordering $\alpha_{j_1}, \alpha_{j_2}, \ldots$ of all the faces in $B(\mu,\nu)$ that, when flipped in order, will convert $\mu$ to $\nu$.  

\begin{lemma}\label{lem:halves2}
For valid MV assignments $\mu$ and $\nu$ of $A_{2n}$, there exists a flippable face $\alpha_j\in B(\mu,\nu)$ such that
 $B(\mu_{\alpha_{j}},\nu)=B(\mu,\nu)\setminus\{\alpha_{j}\}$.
\end{lemma}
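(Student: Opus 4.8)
The plan is to show two things: first, that at least one face in $B(\mu,\nu)$ is flippable under $\mu$, and second, that flipping any flippable face $\alpha_j \in B(\mu,\nu)$ simply removes $\alpha_j$ from the set $B$, i.e., $B(\mu_{\alpha_j},\nu) = B(\mu,\nu)\setminus\{\alpha_j\}$. The second claim is largely bookkeeping about how $S(\mu,\nu)$ and hence $B(\mu,\nu)$ change under a face flip, so I expect the real work to lie in the first claim.

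For the structural claim, recall $S(\mu,\nu) = \{e_{i_1},\ldots,e_{i_{2k}}\}$ and that $B(\mu,\nu)$ pairs up consecutive disagreement-creases $(e_{i_1},e_{i_2}), (e_{i_3},e_{i_4}),\ldots$ and takes the faces strictly between each pair. Suppose $\alpha_j$ is flippable under $\mu$ and lies in $B(\mu,\nu)$; then $\alpha_j$ borders creases $e_j, e_{j+1}$. First I would argue that $\alpha_j$ being flippable forces $\mu(e_j)\ne\mu(e_{j+1})$ (by Lemma~\ref{lem:flip}, a same-parity pair is either flippable trivially or sits in the minority; but more carefully, a face where $\mu(e_j)=\mu(e_{j+1})$ and equal to the sign of the sum is flippable — so I must handle that case too). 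The cleaner route: note that flipping $\alpha_j$ changes exactly $\mu(e_j)$ and $\mu(e_{j+1})$, so $S(\mu_{\alpha_j},\nu)$ differs from $S(\mu,\nu)$ precisely in whether $e_j$ and $e_{j+1}$ are in it. Since $\alpha_j\in B(\mu,\nu)$, exactly one of $e_j,e_{j+1}$ is an endpoint-crease $e_{i_\ell}$ of the pair of $S$-creases surrounding $\alpha_j$ — actually I need the precise combinatorics: $\alpha_j$ lies between some pair $e_{i_{2m-1}}, e_{i_{2m}}$, so $j \ge i_{2m-1}$ and $j+1 \le i_{2m}$. The key sub-case split is whether $j = i_{2m-1}$, $j+1 = i_{2m}$, both, or neither; in each case I compute how $S$ changes and verify $B$ loses exactly $\alpha_j$. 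When $j$ and $j+1$ are both interior (neither an endpoint of $S$), flipping $\alpha_j$ would add $e_j, e_{j+1}$ to $S$, which splits one $B$-interval into pieces — I need to check this still yields $B(\mu,\nu)\setminus\{\alpha_j\}$, which it does because the new pairing in $B(\mu_{\alpha_j},\nu)$ re-pairs $\ldots,e_{i_{2m-1}},e_j$ and $e_{j+1},e_{i_{2m}},\ldots$, and $B(e_{i_{2m-1}},e_j)\cup B(e_{j+1},e_{i_{2m}}) = B(e_{i_{2m-1}},e_{i_{2m}})\setminus\{\alpha_j\}$ exactly.

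For the existence claim — that a flippable face inside $B(\mu,\nu)$ actually exists — I would argue as follows. Consider the signs of $\mu$ on a maximal run of creases $e_{i_{2m-1}}, e_{i_{2m-1}+1},\ldots, e_{i_{2m}}$ bordering the block $B(e_{i_{2m-1}},e_{i_{2m}})$. Since $\mu$ and $\nu$ disagree at both endpoints $e_{i_{2m-1}}, e_{i_{2m}}$ but agree on everything strictly between successive $S$-pairs... hmm — actually the creases strictly inside $B(e_{i_{2m-1}},e_{i_{2m}})$ may themselves be in $S$ if the interval spans several $S$-creases. Let me instead use a counting/parity argument: by Lemma~\ref{lem:halves1}, $|S(\mu,\nu)|$ is even, and by the proof of Theorem~\ref{thm:shwoop alg} there is always a flippable face somewhere; I need one in $B$. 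The cleanest approach is: suppose no face of $B(\mu,\nu)$ is flippable. By Lemma~\ref{lem:flip}, every $\alpha_j\in B(\mu,\nu)$ has $\mu(e_j)=\mu(e_{j+1})$ equal to the minority sign $-\operatorname{sign}(\sum\mu(e_i))$. This forces all creases bordering faces in $B(\mu,\nu)$ to share the minority sign — in particular within each block $B(e_{i_{2m-1}},e_{i_{2m}})$, the creases $e_{i_{2m-1}},\ldots,e_{i_{2m}}$ are all the minority value. But the minority value appears at most $n-1$ times, while on the $\nu$ side these same creases must then all be the majority value (since $\mu\ne\nu$ at the endpoints, and...), forcing $\nu$ to have too few minority creases — a Maekawa contradiction, mirroring exactly the argument used for the shwoop in Theorem~\ref{thm:shwoop alg}. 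Making this contradiction airtight — carefully tracking which creases are forced to which value on both sides, and ensuring the counts genuinely violate $\sum = \pm 2$ — is the main obstacle, and I would model the bookkeeping closely on the termination argument already given for {\sc FEA-Shwoop}.
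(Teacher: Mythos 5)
Your strategy is sound but genuinely different from the paper's, and the step you flag as the ``main obstacle'' closes more easily than you fear. The missing observation is simply that every crease of $S(\mu,\nu)$ borders at least one face of $B(\mu,\nu)$: the crease $e_{i_{2m-1}}$ borders $\alpha_{i_{2m-1}}$ and $e_{i_{2m}}$ borders $\alpha_{i_{2m}-1}$, both of which lie in $B(e_{i_{2m-1}},e_{i_{2m}})$. So if no face of $B(\mu,\nu)$ were flippable, Lemma~\ref{lem:flip} would force every crease of $S(\mu,\nu)$ to carry the minority sign under $\mu$ (say valley, taking $\mu$ majority-mountain with $n+1$ mountains). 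Since $\nu$ agrees with $\mu$ off $S(\mu,\nu)$ and reverses it on $S(\mu,\nu)$, $\nu$ would then have $n+1+|S(\mu,\nu)|\geq n+3$ mountains, violating Maekawa's Theorem. (Your parenthetical that ``these same creases must then all be the majority value'' under $\nu$ is wrong for the interior creases of a block---they are not in $S(\mu,\nu)$ and keep their minority value---but only the $S$-creases are needed for the count.) With that, your two-part plan goes through: the bookkeeping claim that $B(\mu_{\alpha_j},\nu)=B(\mu,\nu)\setminus\{\alpha_j\}$ for \emph{any} flipped $\alpha_j\in B(\mu,\nu)$ is correct exactly as you outline, since the only $S$-creases among those bordering a block are its two endpoints (the $i_\ell$ are consecutive in sorted order), so your four sub-cases are exhaustive, and insertions/deletions in $S$ preserve the parity of each surviving element's position and hence the pairing. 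By comparison, the paper assumes WLOG that $\mu$ is majority-valley, splits into cases according to whether $\nu$ has the same or the opposite majority, and in each case explicitly constructs a flippable face of the right type from the identity $M(S(\mu,\nu),\mu)-V(S(\mu,\nu),\mu)\in\{0,-2\}$. Your proof-by-contradiction avoids the case split on $\nu$ entirely and cleanly separates the existence claim from the set-update claim; the trade-off is that the paper's constructive argument identifies which face to flip, which is what one actually wants when implementing {\sc FEA-Halves}, whereas yours only certifies that some flippable face exists.
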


\begin{proof}
For a set $C$ of creases, let $M(C,\mu) = $ the number of mountain creases in $C$ under a MV assignment $\mu$ and similarly define $V(C,\mu)$ for valleys.  Assume without loss of generality that $\mu$ is majority-valley on $A_{2n}$.  Then, if $\overline{S(\mu,\nu)}$ denotes the compliment of $S(\mu,\nu)$ among the creases in $A_{2n}$, we have, by Maekawa's Theorem applied to $\mu$,
\begin{equation}\label{eq:halveslem1}
M(S(\mu,\nu),\mu)+M(\overline{S(\mu,\nu)},\mu) - V(S(\mu,\nu),\mu) - V(\overline{S(\mu,\nu)},\mu)=-2
\end{equation}
Also, since $\nu$ is valid we have
\begin{equation}\label{eq:halveslem2}
M(S(\mu,\nu),\nu)+M(\overline{S(\mu,\nu)},\nu) - V(S(\mu,\nu),\nu) - V(\overline{S(\mu,\nu)},\nu)=\pm 2.
\end{equation}
However, by definition of $S(\mu,\nu)$, we know that $M(S(\mu,\nu),\mu)=V(S(\mu,\nu),\nu)$ and $V(S(\mu,\nu),\mu)=M(S(\mu,\nu),\nu)$.  Also,
$M(\overline{S(\mu,\nu)},\mu)=M(\overline{S(\mu,\nu)},\nu)$ and
$V(\overline{S(\mu,\nu)},\mu)=V(\overline{S(\mu,\nu)},\nu)$.  Thus Equation~\eqref{eq:halveslem2} becomes
\begin{equation}\label{eq:halveslem3}
V(S(\mu,\nu),\mu)+M(\overline{S(\mu,\nu)},\mu) - M(S(\mu,\nu),\mu) - V(\overline{S(\mu,\nu)},\mu)=\pm 2.
\end{equation}
\indent {\bf Case 1: $\nu$ is majority-valley.} Then Equation~\eqref{eq:halveslem3} will have $-2$ on its right-hand side, and subtracting this from Equation~\eqref{eq:halveslem1} gives
\begin{equation}\label{eq:halveslem4}
M(S(\mu,\nu),\mu) - V(S(\mu,\nu),\mu) = 0.
\end{equation}
Suppose that there is a face $\alpha_j\in B(\mu,\nu)$ whose creases $e_j$ and $e_{j+1}$ have different MV parity under $\mu$, and therefore $\alpha_j$ is a flippable face under $\mu$.  If $e_j$ or $e_{j+1}$ are in $S(\mu,\nu)$, then $S(\mu_{\alpha_j},\nu)$ will be either $S(\mu,\nu)\setminus\{e_j,e_{j+1}\}$ or $(S(\mu,\nu)\setminus\{e_j\})\cup\{e_{j+1}\}$ or $(S(\mu,\nu)\setminus\{e_{j+1}\})\cup\{e_j\}$, and so $B(\mu_{\alpha_j},\nu)$ will equal $B(\mu,\nu)$ but with the face $\alpha_j$ removed, as desired.  If neither $e_j$ nor $e_{j+1}$ are in $S(\mu,\nu)$, then they will be elements of $S(\mu_{\alpha_j},\nu)$, but, by definition of $B(\mu,\nu)$, this means that $\alpha_j$ will not be an element of $B(\mu_{\alpha_j},\nu)$, and so $B(\mu_{\alpha_j},\nu)=B(\mu,\nu)\setminus\{\alpha_j\}$.  

On the other hand, if there is no face $\alpha_j\in B(\mu,\nu)$ with $\mu(e_j)\not=\mu(e_{j+1})$, then by Equation~\eqref{eq:halveslem4} there must be a face $\alpha_j\in B(\mu,\nu)$ with $\mu(e_j)=\mu(e_{j+1})=-1$ (both valleys, since they can't all be mountains), in which case, $\alpha_j$ is flippable by Lemma~\ref{thm:Maekawa}.  Then, $\alpha_j$ must be in some component $B(e_{i_k},e_{i_{k+1}})$ in $B(\mu,\nu)$ that has only valley creases under $\mu$, whereby $B(\mu_{\alpha_k},\nu)=B(\mu,\nu)\setminus \{\alpha_j\}$.


{\bf Case 2: $\nu$ is majority-mountain.} Then, Equation~\eqref{eq:halveslem3} will have $+2$ on its right-hand-side, and subtracting from Equation~\eqref{eq:halveslem1} gives

\begin{equation*}\label{eq:halveslem5}
M(S(\mu,\nu),\mu) - V(S(\mu,\nu),\mu) = -2.
\end{equation*}
This means that we have at least two valley creases in $S(\mu,\nu)$ under $\mu$.  Let $e_{i_m}\in S(\mu,\nu)$ be a valley crease under $\mu$, and let $\alpha_j$ be the face in $B(\mu,\nu)$ that borders $e_{i_m}$.  We claim that  $\alpha_j$ is a flippable face under $\mu$:  If the other crease bordering $\alpha_j$ is also a valley under $\mu$, then since $\mu$ is majority-valley, $\mu_{\alpha_j}$ will be majority-mountain and still satisfy Maekawa's Theorem.  If the other crease bordering $\alpha_j$ is a mountain under $\mu$, then $\mu_{\alpha_j}$ is still majority-valley and satisfies Maekawa because $\mu$ did.  In both cases we have that $\mu_{\alpha_j}$ is a valid MV assignment.  Then $B(\mu_{\alpha_j},\nu)$ will have one fewer face than $B(\mu,\nu)$, the missing face being $\alpha_j$, and the Lemma is proved.

\end{proof}

\begin{algorithm}
\DontPrintSemicolon
\centerline{{\sc FEA-Halves}($A_{2n}, \mu,\nu$)}
{\hrule width 0.94\linewidth}\vspace{0.8em}
Let $L=B(\mu,\nu)$, $S=\{ \}$, $\eta=\mu$ \\
 \If {\textnormal{Length}$(L)>n$} {Let $L=$ the complement of $B(\mu,\nu)$ in $A_{2n}$}
Let $m=$\textnormal{Length}$(L)$\\
\For {$i=1$ \KwTo $m$} {
   Find $\alpha\in L$ such that $L$ is flippable under $\eta$\\
   Append $\alpha$ to $S$\\
   Replace $\eta$ with $\eta_\alpha$ and $L$ with $L\setminus\{\alpha\}$
    
}
Output $S$
\vspace{2mm}\noindent{\hrule width 0.94\linewidth} \vspace{1mm} 
\caption{The FEA (Flipping-Equal-Angles) Halves algorithm.}
\label{alg:FEA-halves}
\end{algorithm}

Therefore, the {\sc FEA-Halves} algorithm (see Algorithm~\ref{alg:FEA-halves}) will input two valid MV assignments, $\mu$ and $\nu$ for $A_{2n}$ and compute the set of faces $B(\mu,\nu) = \bigcup_{j=1}^k B(e_{i_{2j-1}},e_{i_{2j}})$ as well as the complement set of faces (in $A_{2n}$) $\overline{B(\mu,\nu)} = B(e_{i_{2k}},e_{i_1})\cup \bigcup_{j=1}^{k-1} B(e_{i_{2j}},e_{i_{2j+1}})$.  Since these form a disjoint union of all the faces in $A_{2n}$, one of $B(\mu,\nu)$ and $\overline{B(\mu,\nu)}$ will have size $\leq n$.  Pick that set, say it's $B(\mu,\nu)$, and apply Lemma~\ref{lem:halves2} repeatedly to generate a sequence of at most $n$ face flips that will transform $\mu$ into $\nu$.  This proves most of the following theorem.

\begin{theorem}\label{thm:diameter}
The flip graph $\OFG(A_{2n})$ is connected and has diameter $n$.
\end{theorem}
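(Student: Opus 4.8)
The plan is to split the statement into three parts: connectivity, the upper bound $\mathrm{diam}(\OFG(A_{2n}))\le n$, and the matching lower bound $\mathrm{diam}(\OFG(A_{2n}))\ge n$. Connectivity is already in hand: it is Corollary~\ref{cor:A2n connected}, and it also follows from the fact that {\sc FEA-Halves} terminates on every input with a valid sequence of flips. For the upper bound I would formalize the paragraph preceding the theorem. Given valid MV assignments $\mu$ and $\nu$, the sets $B(\mu,\nu)$ and $\overline{B(\mu,\nu)}$ are disjoint and together cover all $2n$ faces of $A_{2n}$, so at least one of them has size at most $n$; call it $L$. Applying Lemma~\ref{lem:halves2} to $L$ produces a flippable face $\alpha\in L$ for which the updated set of this type equals $L\setminus\{\alpha\}$; since $\mu_\alpha$ is again valid, I can reapply the lemma, removing one face of $L$ at each step. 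After $|L|\le n$ steps the running assignment agrees with $\nu$ on every crease, hence equals $\nu$. This exhibits a path of length at most $n$ in $\OFG(A_{2n})$ between any two vertices, so $\mathrm{diam}(\OFG(A_{2n}))\le n$.

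For the lower bound I would give an explicit extremal pair, using the face-flip invariant from Lemma~\ref{lem:S}. Pick $\mu$ to be any MV assignment of $A_{2n}$ with $n+1$ mountains and $n-1$ valleys; this is valid by Theorem~\ref{thm:Maekawa}. Let $\nu=-\mu$, i.e.\ $\nu(e_i)=-\mu(e_i)$ for all $i$; then $\nu$ has $n-1$ mountains and $n+1$ valleys, so it is also valid. By construction $S(\mu,\nu)$ consists of all $2n$ creases, so $|S(\mu,\nu)|=2n$, whereas $|S(\nu,\nu)|=0$. A single face flip changes the MV parity of exactly two creases and therefore changes $|S(\cdot,\nu)|$ by at most $2$. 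Consequently any walk in $\OFG(A_{2n})$ from $\mu$ to $\nu$ uses at least $n$ edges, i.e.\ $d(\mu,\nu)\ge n$. Combined with the upper bound this forces $d(\mu,\nu)=n$ and hence $\mathrm{diam}(\OFG(A_{2n}))=n$.

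I do not expect a serious obstacle here: the substantive work is Lemma~\ref{lem:halves2}, and the theorem mostly assembles already-proved pieces together with the short extremal example above. The two points that need a little care are (i) checking that {\sc FEA-Halves} can actually be iterated — this is exactly the assertion $B(\mu_{\alpha_j},\nu)=B(\mu,\nu)\setminus\{\alpha_j\}$ in Lemma~\ref{lem:halves2}, which allows the same lemma to be reapplied to the valid assignment $\mu_{\alpha_j}$ — and (ii) confirming that when $|B(\mu,\nu)|>n$ and we instead work with the complementary set $\overline{B(\mu,\nu)}$, the same flippability analysis applies; this holds because that set is just the union of the ``other'' blocks between consecutive creases of $S(\mu,\nu)$, and the proof of Lemma~\ref{lem:halves2} is symmetric in the two pairings (its key identities are stated in terms of $S(\mu,\nu)$ and its complement, which do not depend on the pairing). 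Everything else — validity of $\mu$ and of $\nu=-\mu$, and the ``changes $|S|$ by at most $2$'' bound — is immediate from Maekawa's Theorem for $A_{2n}$.
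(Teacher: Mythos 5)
Your proposal is correct and follows essentially the same route as the paper: the upper bound via choosing the smaller of $B(\mu,\nu)$ and $\overline{B(\mu,\nu)}$ and iterating Lemma~\ref{lem:halves2}, and the lower bound via the complement assignment $\overline{\mu}$, where all $2n$ creases must change and each flip changes at most two. Your explicit attention to point (ii) — that Lemma~\ref{lem:halves2} applies equally to the complementary pairing of the creases in $S(\mu,\nu)$ — is a small bit of care the paper itself glosses over, but it is not a different argument.
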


\begin{proof}
To see that the diameter of $\OFG(A_{2n})$ equals $n$, let $\mu$ be any valid MV assignment of $A_{2n}$ and consider the complement MV assignment $\overline{\mu}$ which is $\mu$ but with all the mountains and valleys reversed.  To transform $\mu$ to $\overline{\mu}$, every crease needs to be flipped, and (since there are $2n$ creases and each face flip switches two creases) doing this this requires at least $n$ face flips. The {\sc FEA-Halves} algorithm guarantees at most $n$ face flips to flip from $\mu$ to $\overline{\mu}$, so the diameter of $\OFG(A_{2n})$ is $n$. Examples where this can be done in $n$ face flips can be readily found (for example, let $\mu$ have $\mu(e_i)=1$ for $i=1, 3, 5, \ldots, 2n-3$ and $\mu(e_i)=-1$ for $i=2, 4, 6, \ldots, 2n$ and $i=2n-1$).
\end{proof}

Like {\sc FEA-Shwoop}, the {\sc FEA-Halves}$(A_{2n}, \mu,\nu)$ algorithm runs in $O(n^2)$ time since each pass through $B(\mu,\nu)$ to search for a flippable face takes $O(n)$ steps and Length$(B(\mu,\nu))$ is $O(n)$.

\section{Counting edges of $\OFG(A_{2n})$}\label{sec:edges}

We saw in Section~\ref{sec:background} that $\OFG(A_{2n})$ has $2\binom{2n}{n-1}$ vertices. Counting the edges in $\OFG(A_{2n})$ is not as straight-forward.  
We first perform this enumeration using the method shown in Algorithm~\ref{alg:edges1}, which we call {\sc Edge-Count}$(n)$.  This takes each valid MV assignment $\mu$ of $A_{2n}$ and uses Lemma~\ref{lem:flip} to compute the degree of $\mu$ in $\OFG(A_{2n})$: each vertex $\mu$ will have degree $2n$ unless there are non-flippable faces (bordered by ``VV" if $\mu$ is majority-mountain or by ``MM" if $\mu$ is majority-valley) which must then be subtracted from $2n$.  We then take the sum of the vertex degrees and divide by two to find the number of edges.

\begin{algorithm}
\DontPrintSemicolon
\centerline{{\sc Edge-Count}($n$)}
{\hrule width 0.94\linewidth}\vspace{0.8em}
Let $L=2\binom{2n}{n-1}$, {\sc MVAssigns} $=$ all $L$ valid MV assignments of $A_{2n}$ \\
\For{$i=1$ \KwTo $L$}
{\If{{\sc MVAssigns}$[i]$ \textnormal{is majority mountain}}{Let Deg$[i]=2n-$(number of ``VV" in {\sc MVAssigns}$[i]$)}
{\If{{\sc MVAssigns}$[i]$ \textnormal{is majority valley}}{Let Deg$[i]=2n-$(number of ``MM" in {\sc MVAssigns}$[i]$)}}}
Output $(\sum$ Deg$[i])/2$
\vspace{2mm}\noindent{\hrule width 0.94\linewidth} \vspace{1mm} 
\caption{Counting the edges in $\OFG(A_{2n})$.}
\label{alg:edges1}
\end{algorithm}

The output of {\sc Edge-Count}$(n)$ for $n=1$ to $n=13$ is
$$2, 16, 84, 400, 1820, 8064, 35112, 151008, 643500, 2722720, 11454872, 47969376, 200107544.$$
At the time of this writing, the sequence {\sc Edge-Count}$(n)$ did not appear in the Online Encyclopedia of Integer Sequences \cite{OEIS}.  

The running time of this algorithm is clearly exponential in $n$, since it checks every valid MV assignment of $A_{2n}$.  Fortunately, we can do better.

\begin{theorem}\label{thm:edgecount}
The number of edges in $\OFG(A_{2n})$ is $\frac{(n+1)(3n-2)}{2n-1}\binom{2n}{n-1}$.
\end{theorem}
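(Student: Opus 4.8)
The plan is to compute the sum of the vertex degrees of $\OFG(A_{2n})$ and halve it. By Lemma~\ref{lem:flip}, the degree of a vertex $\mu$ is $2n$ minus the number of faces $\alpha_k$ with $\mu(e_k)=\mu(e_{k+1})$ equal to the \emph{minority} MV value. So if $\mu$ is majority-mountain (hence has exactly $n-1$ valleys), then $\deg(\mu)=2n-\#VV(\mu)$, where $\#VV(\mu)$ is the number of cyclically consecutive pairs $(e_k,e_{k+1})$ that are both valleys; symmetrically, a majority-valley $\mu$ has $\deg(\mu)=2n-\#MM(\mu)$. I would first observe that the complementation map $\mu\mapsto\overline\mu$ is a degree-preserving bijection between the majority-mountain vertices and the majority-valley vertices (it turns each ``VV'' into an ``MM'' and vice versa, and preserves validity by Theorem~\ref{thm:Maekawa}). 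Hence the total degree sum is twice the sum over majority-mountain vertices, and the number of edges is
$$\sum_{\mu\ \mathrm{maj\text{-}mtn}}\deg(\mu) \;=\; 2n\binom{2n}{n-1} \;-\; \sum_{\mu\ \mathrm{maj\text{-}mtn}}\#VV(\mu),$$
using that there are $\binom{2n}{n-1}$ ways to choose the $n-1$ valley creases.

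Next I would evaluate $\sum_\mu \#VV(\mu)$ by exchanging the order of summation, i.e.\ counting pairs (majority-mountain assignment $\mu$, face $\alpha_k$ with $e_k,e_{k+1}$ both valleys). For a fixed $k$ there are $\binom{2n-2}{n-3}$ such $\mu$ (place the remaining $n-3$ valleys among the remaining $2n-2$ creases), and there are $2n$ choices of $k$, so by the cyclic symmetry of $A_{2n}$ we get $\sum_\mu \#VV(\mu)=2n\binom{2n-2}{n-3}$. Therefore the number of edges equals $2n\binom{2n}{n-1}-2n\binom{2n-2}{n-3}$, which matches the computed values $2,16,84,400,\dots$ (and, pleasantly, is $0$ in the $\#VV$ correction term for $n\le 2$, consistent with $A_2,A_4$ having no non-flippable faces).

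Finally I would finish with a short binomial manipulation: since
$$\frac{\binom{2n-2}{n-3}}{\binom{2n}{n-1}}=\frac{(n-1)(n-2)}{2n(2n-1)},$$
we may factor out $\dfrac{2n}{2n(2n-1)}\binom{2n}{n-1}$ and simplify the remaining numerator $2n(2n-1)-(n-1)(n-2)=3n^2+n-2=(3n-2)(n+1)$, giving the number of edges as $\dfrac{(n+1)(3n-2)}{2n-1}\binom{2n}{n-1}$, as claimed.

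\textbf{Main obstacle.} There is no deep difficulty here; the work is in correctly setting up the counting. The one point requiring a little care is the justification that the degree sum collapses to twice the majority-mountain sum (i.e.\ that $\mu\mapsto\overline\mu$ preserves degree), which follows directly from Lemma~\ref{lem:flip}, together with the switch-of-summation step for $\sum_\mu\#VV(\mu)$ (a linearity-of-counting argument over the $2n$ cyclic edge-positions). Once those two reductions are in place, the closed form is a one-line identity, with the only caveat being to read $\binom{2n-2}{n-3}$ as $0$ when $n\le 2$.
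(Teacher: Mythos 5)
Your proof is correct and is essentially the paper's argument: the paper computes the expected degree of a uniformly random valid MV assignment via linearity of expectation and the probability $\tfrac{\binom{2n-2}{n-3}}{\binom{2n}{n-1}}=\tfrac{(n-1)(n-2)}{2n(2n-1)}$ that a given face is non-flippable, which is exactly your double count of (assignment, bad face) pairs divided by $|V|$. The only cosmetic difference is that you phrase linearity of expectation as an exchange of summation order and handle the mountain/valley symmetry by an explicit complementation bijection rather than by conditioning.
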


Note that the formula in Theorem~\ref{thm:edgecount} matches the output of  {\sc Edge-Count}$(n)$.  We prove this formula using a probabalistic approach.

\begin{proof}
In any uniformly chosen at random MV assignment of $A_{2n}$, some faces will be flippable and some will not be flippable.  Define random variables $G =$ the number of flippable faces in a MV assignment of $A_{2n}$ (or ``good" faces) and $B=$ the number of unflippable faces (or ``bad" faces).  Also let ${\boldsymbol  1}_{\alpha_i}$ denote the indicator random variable for $\alpha_i$ being a bad face.  That is, $B={\boldsymbol  1}_{\alpha_1}+{\boldsymbol  1}_{\alpha_2}+\cdots + {\boldsymbol  1}_{\alpha_{2n}}$.
Then linearity of expectation gives us
$$\mathbb{E}[G] = \mathbb{E}[2n-B] = \mathbb{E}[2n-\Sigma{\boldsymbol 1}_{\alpha_i}] = 2n-\sum \mathbb{E}[{\boldsymbol 1}_{\alpha_i}]= 2n-2nP[\alpha_i\mbox{ is bad}].$$
Now, by Lemma~\ref{lem:flip}, $P[\alpha_i$ is bad$]= P[e_i$ and $e_{i+1}$ are minority$]=$
$$P[((e_i\mbox{ and }e_{i+1}\mbox{ are V})\mbox{ and }(\mu\mbox{ is majority M)) or }((e_i\mbox{ and }e_{i+1}\mbox{ are M})\mbox{ and }(\mu\mbox{ is majority V}))]$$ 
$$= 2P[e_i\mbox{ and }e_{i+1}\mbox{ are V and }\mu\mbox{ is majority M}] = $$
$$2P[\mu\mbox{ is majority M}] P[e_i\mbox{ and }e_{i+1}\mbox{ are V} | \mu\mbox{ is majority M}]$$
$$=2(1/2)P[e_i\mbox{ and }e_{i+1}\mbox{ are V} | \mu\mbox{ is majority M}]$$
$$= \frac{\binom{2n-2}{n-3}}{\binom{2n}{n-1}}= \frac{(n-1)(n-2)}{2n(2n-1)}.$$
Therefore $\mathbb{E}[G] = 2n(1-\frac{(n-1)(n-2)}{2n(2n-1)}.)$.  However, since MV assignments $\mu$ of $A_{2n}$ form the vertices of $\OFG(A_{2n})$, we have that $\mathbb{E}[G]=\mathbb{E}[$deg$(\mu)$ in $\OFG(A_{2n})]$, and
$$\mathbb{E}[\mbox{deg}(\mu)]=\frac{1}{|V|}\sum_{\mu\in V}\mbox{deg}(\mu)  = \frac{2|E|}{|V|}$$
where $V$ and $E$ are the vertices and edges in $\OFG(A_{2n})$, respectively.  Thus we have
$$|E|=\frac{|V|}{2} 2n\left(1-\frac{(n-1)(n-2)}{2n(2n-1)}\right) = \frac{(n+1)(3n-2)}{2n-1}\binom{2n}{n-1},$$
as desired.
 \end{proof}

The {\sc Edge-Count}$(n)$ algorithm can also be used to generate the degree sequence for $\OFG(A_{2n})$.  
Let $f_k(2n)$ denote the number of vertices of degree $k$ in $\OFG(A_{2n})$, so that {\sc Edge-Count}$(n)=(1/2)\sum_{k=n-2}^{2n} k f_k(2n)$.  The values for $f_k(2n)$ for $2\leq n \leq 6$ and the possible degrees $k$ are shown in Table~\ref{table1}, and studying these led to the following formula.

\begin{table}
\centerline{
\begin{tabular}{c|cccccccccccc}
$2n$\textbackslash$k$ & 4 & 5 & 6 & 7 & 8 & 9 & 10 & 11 & 12\\
\hline
4 & 8 \\
6 & & 12 & 18 \\
8 & & & 16 & 64 & 32\\
10 & & & & 20 & 150 & 200 & 50\\
12 & & & & & 24 & 288 & 720 & 480 & 72
\end{tabular}}
\caption{Values for $f_k(2n)$ generated by running {\sc Edge-Count}$(n)$.}\label{table1}
\end{table}

\begin{theorem}\label{thm:fk(2n)}
The number of vertices of degree $k$ in $\OFG(A_{2n})$ is
$$f_k(2n) = \frac{4n}{n+1}\binom{n+1}{k-n-1}\binom{n-2}{k-n-2},$$
for $n+2\leq k \leq 2n$.
\end{theorem}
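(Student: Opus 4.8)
The plan is to count vertices of $\OFG(A_{2n})$ of degree exactly $k$ by classifying them according to the sign of $\sum\mu(e_i)$ and the combinatorial pattern of their minority creases. By symmetry (complementing all creases), exactly half the degree-$k$ vertices are majority-mountain, so I will count majority-mountain MV assignments with a prescribed degree and multiply by $2$. Fix $\mu$ majority-mountain, so it has $n+1$ mountains and $n-1$ valleys arranged cyclically around the $2n$ creases. By Lemma~\ref{lem:flip}, the bad (non-flippable) faces are exactly those bordered by two consecutive valley creases (a ``VV'' pair), so $\deg(\mu) = 2n - (\text{number of VV-adjacencies in the cyclic word})$. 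Thus I need: among cyclic binary words of length $2n$ with $n-1$ valleys and $n+1$ mountains, how many have exactly $b = 2n-k$ adjacencies of consecutive valleys?

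The key step is a standard ``block'' decomposition. If the $n-1$ valleys occupy $j$ maximal runs (blocks) around the circle, then the number of VV-adjacencies is $(n-1) - j$, because each run of length $\ell$ contributes $\ell - 1$ internal VV-adjacencies and $\sum \ell = n-1$. So $\deg(\mu) = 2n - (n-1) + j = n+1+j$, forcing $j = k - n - 1$. Hence I must count cyclic arrangements of $n+1$ mountains and $n-1$ valleys in which the valleys form exactly $j := k-n-1$ nonempty runs; note the constraint $n+2 \le k \le 2n$ corresponds exactly to $1 \le j \le n-1$, as it should. By a circular composition argument (e.g. fix the circular positions by cutting at a distinguished point, or use the cycle lemma / a direct double-count): the number of ways to break $n-1$ identical valleys into $j$ nonempty cyclic runs and $n+1$ mountains into $j$ nonempty cyclic runs, then interleave them alternately around the circle, is $\frac{2n}{j}\binom{n-2}{j-1}\binom{n}{j-1}$ — the factor $\frac{2n}{j}$ accounting for the rotational placement of the alternating block structure (there are $2n$ rotations but each configuration is counted $j$ times by which valley-block is ``first''). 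Setting $j = k-n-1$ gives $\frac{2n}{k-n-1}\binom{n-2}{k-n-2}\binom{n}{k-n-2}$ majority-mountain vertices of degree $k$; doubling and simplifying the binomial coefficients (using $\frac{1}{k-n-1}\binom{n}{k-n-2} = \frac{1}{n+1}\binom{n+1}{k-n-1}$, which is the identity $\frac{1}{j}\binom{n}{j-1} = \frac{1}{n+1}\binom{n+1}{j}$) yields exactly $f_k(2n) = \frac{4n}{n+1}\binom{n+1}{k-n-1}\binom{n-2}{k-n-2}$.

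The main obstacle I anticipate is getting the circular counting exactly right — in particular the $\frac{2n}{j}$ overcounting factor and the treatment of the ``edge cases'' where the valley runs wrap around the cut point. I would handle this cleanly by the transfer/cycle-lemma style argument: count \emph{pointed} configurations (a configuration together with a choice of one valley-block declared first), which is $2n \cdot (\text{number of linear interleavings starting with a valley block})$ divided appropriately, or equivalently observe that an alternating arrangement of $j$ valley-blocks and $j$ mountain-blocks around a labeled circle of $2n$ positions is determined by: the starting position of the first valley block ($2n$ choices, then divide by $j$ for the cyclic symmetry of which block is first), the composition of $n-1$ into $j$ ordered parts ($\binom{n-2}{j-1}$ ways), and the composition of $n+1$ into $j$ ordered parts ($\binom{n}{j-1}$ ways). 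As a sanity check I would verify the formula against Table~\ref{table1} (e.g. $n=3$, $k=5$: $\frac{12}{4}\binom{4}{1}\binom{1}{0} = 12$; $k=6$: $\frac{12}{4}\binom{4}{2}\binom{1}{1} = 18$) and confirm that $\frac12\sum_k k\,f_k(2n)$ recovers the edge count of Theorem~\ref{thm:edgecount}, which also serves as an independent consistency check on the constant.
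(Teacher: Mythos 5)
Your proposal is correct and follows the same overall strategy as the paper's proof (due to Jonah Ostroff): restrict to majority-mountain assignments, use Lemma~\ref{lem:flip} to see that the degree is $2n$ minus the number of cyclically adjacent VV pairs, translate that count into the number of maximal valley runs ($j=k-n-1$ runs forces $b=(n-1)-j$ bad faces), and then enumerate cyclic binary words with a prescribed number of runs. The one substantive difference is how the circular enumeration is normalized. The paper places the $n+1$ mountains on a circle with a marked starting mountain, distributes valleys into gaps by stars and bars, and then argues that each target assignment arises exactly $n+1$ times among all rotations --- an argument that needs a separate case analysis for assignments with rotational symmetry. You instead mark one of the $j$ valley blocks as ``first,'' which yields $\frac{2n}{j}\binom{n-2}{j-1}\binom{n}{j-1}$ via a clean bijection between pointed configurations and (starting position, composition pair) data on the labeled cycle, with no symmetry cases to handle; the identity $\frac{1}{j}\binom{n}{j-1}=\frac{1}{n+1}\binom{n+1}{j}$ then converts this to the stated form. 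Your block-counting formula, the range check $1\le j\le n-1$, and the numerical checks against Table~\ref{table1} are all correct, so the argument is complete once the pointed-configuration bijection is written out as you describe; if anything, it is slightly cleaner than the paper's rotation argument.
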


We provide a combinatorial proof of this result developed by Jonah Ostroff.

\begin{proof}
We will enumerate the number of valid MV assignments $\mu$ of $A_{2n}$ that are majority-mountain with $b$ non-flippable faces; such a vertex in $\OFG(A_{2n})$ will have degree $k = 2n-b$, and this enumeration will equal $f_k(2n)/2$.  In this situation we will have $n+1$ mountains, $n-1$ valleys, and by Lemma~\ref{lem:flip} there should be exactly $b$ pairs of consecutive creases around $A_{2n}$ that are ``VV" under $\mu$.  That means there are exactly $n-b-1$ valley creases that are \textit{not} followed by a valley (say, going clockwise around the vertex).  Therefore we are counting the number of ways to arrange mountains and valleys so that there are exactly $n-b-1$ runs of consecutive valleys.

We can construct such MV assignments as follows:

\begin{itemize}

\item First we place the $n+1$  mountains around a circle and mark one of them as the ``start" point.

\item Then we place boxes in $n-b-1$ of the $n+1$ spaces between the mountains.  

\item Place one valley in each of the $n-b-1$ boxes.  Then place the remaining $b$ valleys in any of the $n-b-1$ boxes; by a ``stars and bars" counting argument there are $\binom{n-b-1+b-1}{b} = \binom{n-2}{b}$ ways to do this.

\end{itemize}

This gives us a MV assignment with the required conditions, but we've only counted ones that ``start" with a mountain crease.  Call the set of these MV assignments $A$.  We rotate each member of $A$ around the $A_{2n}$ crease pattern to get a bigger set of MV assignments, $B$, with $2n|A|$ elements.  We claim that each MV assignment we are looking for (valid, majority-mountain with exactly $b$ non-flippable faces) appears in $B$ exactly $(n+1)$ times.  To see this, let $\mu$ meet our required conditions and suppose $\mu$ has no rotational symmetry (meaning that each rotation of $\mu$ in $A_{2n}$ is a MV assignment distinct from $\mu$).  Then a rotated version of $\mu$ will appear in $A$ exactly $(n+1)$ times, since there are $(n+1)$ mountains in $\mu$.  These rotations of $\mu$ in $A$ will result in exactly $(n+1)$ copies of $\mu$ appearing in $B$.

On the other hand, suppose $\mu$ has rotational symmetry, say $r^j(\mu)=\mu$ for some $j$ that divides $2n$, where $r(\mu)$ is $\mu$ rotated by $\pi/n$ in $A_{2n}$.  Let $2n=qj$.  Then a rotated copy of $\mu$ will appear in $A$ exactly $(n+1)/q$ times (that is, it would be $(n+1)$ times, one for each mountain in $\mu$, but every $q$th one is a duplicate because of the rotational symmetry).  Each of these rotated copies of $\mu$ are rotated a full $2n$ times in $B$, each giving us $q$ copies of $\mu$ in $B$.  That's a total of $q(n+1)/q=(n+1)$ copies of $\mu$ in $B$.

Therefore, the number of valid MV assignments of $A_{2n}$ that are majority-mountain and have exactly $b$ non-flippable faces is

$$\frac{2n}{n+1}\binom{n+1}{n-b-1}\binom{n-2}{b}.$$
To include the majority-valley cases, we multiply by two. Substituting $b=2n-k$ and simplifying gives the desired result.

\end{proof}



\section{Conclusion}\label{sec:conclusion}

We have seen how the origami flip graph of $A_{2n}$ has the largest size among the flip graphs of flat vertex folds of degree $2n$, that it contains all such origami flip graphs as subgraphs, and that it is a connected graph with diameter $n$.  Furthermore, the algorithms used to prove these facts could be useful in further studies of origami flip graphs.  For example, the {\sc FEA-Shwoop} algorithm has the interesting property that it provides a way to flip between any two valid MV assignments of $A_{2n}$ without flipping the face $\alpha_{2n}$.  Since the labeling of the faces was arbitrary, this means that we can always avoid flipping a chosen face and still traverse the origami flip graph.  This feature is used in the forthcoming paper \cite{ffconnected} to help classify when $\OFG(C)$ will be connected for arbitrary flat vertex folds $C$.  Indeed, \cite{ffconnected} also explores when the {\sc FEA-Shwoop} algorithm can be used in other situations besides the crease pattern $A_{2n}$.

Despite $A_{2n}$ being, in a sense, the most simple case of all degree-$2n$ flat vertex folds, as it requires only Maekawa's Theorem to determine if a MV assignment will be valid, its origami flip graph nonetheless exhibits surprising complexity.  Further details on the structure of $\OFG(A_{2n})$ remains unexplored.  For instance, Theorem~\ref{thm:maximal} does not tell the whole story about the number of copies of $\OFG(C)$ that can be found in $\OFG(A_{2n})$.

\begin{openproblem}
If $C$ is a flat vertex fold of degree $2n$, how do we determine the exact number of distinct subgraphs of $\OFG(A_{2n})$ that are isomorphic to $\OFG(C)$?
\end{openproblem}

Also, we have seen that determining the degree sequence of $\OFG(A_{2n})$ involves the different ways to separate the valleys (assuming we're majority-mountain) into runs of consecutive valleys.  In other words, if we have $n-1$ valleys we are considering the integer partitions of $n-1$.  The different integer partitions affect $f_k(2n)$ for different $k$, so their influence is lost in Theorem~\ref{thm:fk(2n)}.  However, perhaps another connection is possible.

\begin{openproblem}
Can we further identify the role that integer partitions of $n-1$ play in $\OFG(A_{2n})$?
\end{openproblem}

 This is further evidence, also seen in \cite{basics}, that the single-vertex case of flat origami continues to possess more combinatorial richness than one would originally expect.


\section*{Acknowledgements}

This work was supported by NSF grants DMS-1851842 and DMS-1906202.
The authors thank Robert Dougherty-Bliss for helpful formula-conjecturing from our {\sc Edge-Count}$(n)$ data and to Jonah Ostroff for his proof of Theorem~\ref{thm:fk(2n)}.  The authors also thank Mathematical Staircase, Inc. for procuring funding for this research at the 2019 MathILy-EST REU, as well as the multitude of people who provided helpful commentary along the way. 


\bibliography{AEAff-paper.bib}
\bibliographystyle{abbrv}

\end{document}